\newtheorem{theorem}{Theorem}[section]
\newtheorem{lemma}[theorem]{Lemma}
\newtheorem{corollary}[theorem]{Corollary}
\newtheorem{proposition}[theorem]{Proposition}
\theoremstyle{definition}
\newtheorem{remark}[theorem]{Remark}
\newcommand{\N}{\mathbb N}
\DeclareMathOperator{\ord}{ord}
\DeclareMathOperator{\lcm}{lcm}
\DeclareMathOperator{\supp}{supp}
\newcommand{\zsm}[2]{\mathsf{s}_{#1 \mathbb{N}}(#2)}
\renewcommand{\t}{\, | \,}
\renewcommand{\P}{\mathbb P}
\newcommand{\Sum}[2]{\underset{#1}{\overset{#2}{\sum}}}
\newcommand{\be}{\begin{equation}}
\newcommand{\ee}{\end{equation}}
\newcommand{\ber}{\begin{eqnarray}}
\newcommand{\eer}{\end{eqnarray}}
\newcommand{\nn}{\nonumber}
\newcommand{\ba}{\begin{align}}
\newcommand{\ea}{\end{align}}
\newcommand{\vp}{\mathsf{v}}
\newcommand{\und}{\;\mbox{ and } \;}
\numberwithin{equation}{section}
\begin{document}

\address{Institut f\"ur Mathematik und Wissenschaftliches Rechnen \\
Karl-Franzens-Universit\"at Graz \\
Heinrichstra\ss e 36\\
8010 Graz, Austria} \email{alfred.geroldinger@uni-graz.at,
diambri@hotmail.com}

\address{CMLS\\ Ecole polytechnique \\ 91128 Palaiseau
cedex, France} \email{wolfgang.schmid@math.polytechnique.fr}

\author[A. Geroldinger, D.J. Grynkiewicz, and W.A. Schmid]{Alfred Geroldinger and David J. Grynkiewicz and Wolfgang A. Schmid}

\thanks{This work was supported by
the Austrian Science Fund FWF, Project Numbers P21576-N18 and
J2907-N18}

\keywords{zero-sum sequences, Erd{\H o}s--Ginzburg--Ziv constant,
Davenport constant}

\subjclass[2010]{11B30, 11P70, 20K01}

\begin{abstract}
For a finite abelian group $G$ and a positive integer $d$, let
$\mathsf s_{d \mathbb N} (G)$ denote the smallest integer $\ell \in
\mathbb N_0$ such that every sequence $S$ over $G$ of length $|S| \ge
\ell$ has a nonempty zero-sum subsequence $T$ of length $|T| \equiv
0 \mod d$. We determine $\mathsf s_{d \mathbb N} (G)$
for all $d\geq 1$ when $G$  has rank at most two and, under mild conditions on $d$, also obtain precise values in the case of $p$-groups. In the same spirit, we obtain new upper
bounds for the Erd{\H o}s--Ginzburg--Ziv constant provided that, for
the $p$-subgroups $G_p$ of $G$, the Davenport constant $\mathsf D
(G_p)$ is bounded above by $2 \exp (G_p)-1$. This generalizes former
results for groups of rank two.
\end{abstract}

\title{Zero-sum problems with congruence conditions}

\maketitle

\bigskip
\section{Introduction}
\bigskip

Let $G$ be an additive finite abelian group. A direct zero-sum
problem, associated to a given Property P, asks for the extremal
conditions which guarantee  that every sequence $S$ over $G$
satisfying these conditions has a zero-sum subsequence with Property
P. Most of the properties studied so far deal with the length of the
zero-sum subsequence; others consider the cross number (see, e.g., \cite{GeKL}) or versions of this problem involving weights (see, e.g., \cite{AGS}).
In the case of lengths, a direct zero-sum problem asks for
the smallest integer $\ell \in \mathbb N_0$ such that every sequence
$S$ over $G$ of length $|S| \ge \ell$ has a zero-sum subsequence
of some prescribed length. This leads to the definition of the
following zero-sum constant:

\begin{quote}
For a subset $L \subset \N$, let $\mathsf{s}_L(G)$ denote the
smallest $\ell \in \N_0 \cup \{\infty\}$  such that every sequence \
$S$ \ over $G$ of length $|S| \ge \ell$ \ has a  zero-sum subsequence
$T$ of length $|T| \in L$.
\end{quote}
Note that $\mathsf{s}_L(G)=\infty$ if and only if $L \cap \exp(G)\mathbb{N}=\emptyset$.
The following sets lead to classical zero-sum invariants (the reader
may want to consult one of the surveys \cite{Ga-Ge06b, Ge09a} or the
monograph \cite{Ge-HK06a}):
\begin{itemize}
\item $\mathsf{s}_{\N}(G)=\mathsf{D}(G)$ is the Davenport constant,
\item $\mathsf{s}_{\{\exp(G)\}}(G)=\mathsf{s}(G)$ is the Erd{\H o}s--Ginzburg--Ziv constant,
\item $\mathsf{s}_{\{|G|\}}(G)=\mathsf{ZS}(G)$ is the  zero-sum constant, and
\item $\mathsf{s}_{[1,\exp(G)]}(G)=\eta(G)$ is the  $\eta$-invariant.
\end{itemize}

Moreover, $\mathsf{s}_L(G)$ has been investigated for various other sets, including:
$[1,k]$ for $k \ge \exp(G)$ (see, e.g., \cite{Delorme,Bh-SP07a, Fr-Sc10a}), $\{k \exp(G)\}$ for $k \in \N$ (see, e.g., \cite{GaoT,Kubertin}), \
 $\N \setminus k \N$ for $k \nmid \exp(G)$ and other unions of arithmetic progressions (see \cite{Restricted3,Sc01,Gi12a}), and $\exp(G)\N$ (see, e.g., \cite{Restricted4}). And, for recent closely related results, see, e.g., \cite{Gr05b,Ga-Ge07a,Ga-Pe09a,Gr10a,Ga-Ha-Wa10a,yuan}.

\smallskip
In the present paper, we investigate $\zsm{d}{G}$, first proving upper and lower bounds in terms
of a Davenport constant and its canonical lower bound. This allows us
to determine $\zsm{d}{G}$ for cyclic groups and, under
mild conditions on $d$, for $p$-groups (Theorem \ref{onsd_thm}).
Then we suppose that $d = \exp (G)$ and that, for the $p$-subgroups $G_p$ of
$G$, the Davenport constant $\mathsf D (G_p)$ is bounded above by $2
\exp (G_p)-1$ (note that every group of rank at most two satisfies
this condition). In this setting, we obtain  canonical upper bounds
for $\zsm{d}{G}$ and, among others, for the Erd{\H
o}s--Ginzburg--Ziv constant $\mathsf s (G)$ (Theorem
\ref{ind_thm_bound}, and see Theorem \ref{ind_thm_equ} for
a result in a similar vein). Next, using a more involved argument, we determine $\zsm{d}{G}$ for rank $2$ groups $G$, showing that $\zsm{d}{G}$ attains the value that would easily follow from our bounds if the conjectured value of $\mathsf D(G)$ for rank $3$ groups were true.
In the final section, we
apply these results to a problem from the theory of non-unique
factorizations which motivated the present investigations.

\bigskip
\centerline{\it Throughout this paper, let $G$ be a finite abelian
group.}

\bigskip
\section{Preliminaries} \label{2}
\bigskip

Our notation and terminology are consistent with \cite{Ga-Ge-Sc07a}
and \cite{Ge-HK06a}. We briefly gather some key notions and fix the
notation concerning sequences over  abelian groups. Let $\mathbb N$
denote the set of positive integers, let $\mathbb P \subset \mathbb N$ be
the set of prime numbers and let $\mathbb N_0 = \mathbb N \cup \{ 0
\}$. For real numbers $a, b \in \mathbb R$, we set $[a, b] = \{ x
\in \mathbb Z \mid a \le x \le b\}$. For $n \in \mathbb N$ and $p
\in \P$, let $C_n$ denote a cyclic group with $n$ elements
 and $\mathsf v_p (n) \in \N_0$ the
$p$-adic valuation of $n$ with $\mathsf v_p (p) = 1$. Throughout,
all abelian groups will be written additively.

For a subset $G_0 \subset G$, let $\langle G_0 \rangle $ denote the
subgroup generated by $G_0$. For a prime $p \in \P$, we denote by
$G_p = \{g \in G \mid \ord (g) \ \text{is a power of} \ p \}$ the
$p$-primary component of $G$. Suppose that $G \cong C_{n_1} \oplus
\ldots \oplus C_{n_r}$ with $r \in \mathbb N_0$ and $1 < n_1 \t
\ldots \t n_r$. Then $r = \mathsf r (G)$ will be called the {\it
rank} of $G$, and we set
\[
\mathsf D^* (G) = 1 + \sum_{i=1}^r (n_i-1) \,.
\]
Note that $\mathsf r (G) = 0$ and $\mathsf D^* (G) = 1$ for $G$
trivial.

Let $\mathcal F(G)$ be the free abelian monoid with basis $G$. The
elements of $\mathcal F(G)$ are called \ {\it sequences} \ over $G$.
We write sequences $S \in \mathcal F (G)$ in the form
\[
S =  \prod_{g \in G} g^{\mathsf v_g (S)}\,, \quad \text{with} \quad
\mathsf v_g (S) \in \mathbb N_0 \quad \text{for all} \quad g \in G
\,.
\]
We call \ $\mathsf v_g (S)$  the \ {\it multiplicity} \ of $g$ in
$S$, and we say that $S$ \ {\it contains} \ $g$ \ if \ $\mathsf v_g
(S) > 0$.   A sequence $S_1 $ is called a \ {\it subsequence} \ of
$S$ \ if \ $S_1 \, | \, S$ \ in $\mathcal F (G)$ \ (equivalently, \
$\mathsf v_g (S_1) \le \mathsf v_g (S)$ \ for all $g \in G$). If a
sequence $S \in \mathcal F(G)$ is written in the form $S = g_1 \cdot
\ldots \cdot g_l$, we tacitly assume that $l \in \mathbb N_0$ and
$g_1, \ldots, g_l \in G$.

\smallskip

For a sequence
\[
S \ = \ g_1 \cdot \ldots \cdot g_l \ = \  \prod_{g \in G} g^{\mathsf
v_g (S)} \ \in \mathcal F(G) \,,
\]
we call
\[
|S| = l = \sum_{g \in G} \mathsf v_g (S) \in \mathbb N_0 \qquad
\text{the \ {\it length} \ of \ $S$}\,,
\]
\[
\supp (S) = \{g \in G \mid \mathsf v_g (S) > 0 \} \subset G \qquad
\text{the \ {\it support} \ of \ $S$} \quad \text{and}
\]
\[
\sigma (S) = \sum_{i = 1}^l g_i = \sum_{g \in G} \mathsf v_g (S) g
\in G \qquad \text{the \ {\it sum} \ of \ $S$}\,.
\]
The sequence \ $S$ \ is called
\begin{itemize}

\item a {\it zero-sum sequence} \ if \ $\sigma (S) = 0$,

\item {\it zero-sum free} \ if there is no  nonempty zero-sum subsequence,

\item a {\it minimal zero-sum sequence} \ if $S$ is a nonempty zero-sum sequence and every $S'|S$ with $1\leq |S'|<|S|$  is
      zero-sum free.
\end{itemize}
Every map of abelian groups \ $\varphi \colon G \to H$ \ extends to
a homomorphism \ $\varphi \colon \mathcal F (G) \to \mathcal F (H)$
\ where \ $\varphi (S) = \varphi (g_1) \cdot \ldots \cdot \varphi
(g_l)$.  If $\varphi$ is a homomorphism, then $\varphi (S)$ is a
zero-sum sequence if and only if $\sigma (S) \in \text{\rm Ker}
(\varphi)$. We let $\mathcal A(G)$ denote the set of all minimal zero-sum sequences over $G$.

\bigskip
\section{Basic bounds and results for cyclic and $p$-groups}
\bigskip

In this section, we establish some of our results on $\zsm{d}{G}$.
In particular, we obtain the following result.

\begin{theorem} \label{onsd_thm}
Let $d \in \N$ and let $n= \exp(G)$.
\begin{enumerate}
\item Suppose $G$ is cyclic. Then $\zsm{d}{G}= \mathsf D^*(G\oplus C_d)=\lcm(n,d)+ \gcd(n,d) - 1$.
\item Suppose $G$ is $p$-group.
\begin{enumerate}
\item  $\zsm{d}{G} = \mathsf D^*(G\oplus C_d)=\mathsf{D}^{\ast} (G) + d -1$ for $d=p^{\alpha}$ with $\alpha \in \N_0$.
\item $\zsm{d}{G} = \mathsf D^*(G\oplus C_d)=\mathsf{D}^{\ast}(G) + d-1$ for each $d \in \N$ with
$\mathsf{D}^{\ast}(G)\le p^{\mathsf{v}_p(d)}$.
\item $\zsm{d}{G} =\mathsf D^*(G\oplus C_d)= \mathsf{D}^{\ast}(G)-n + \lcm(n,d)+ \gcd(n,d) - 1$ for each $d \in \N$ with $p^{\mathsf{v}_p(d)} \le 2 n - \mathsf{D}^{\ast}(G)$.
\end{enumerate}
\end{enumerate}
\end{theorem}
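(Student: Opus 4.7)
The strategy is to establish the two-sided bound
\[
\mathsf D^*(G\oplus C_d)\;\le\;\zsm{d}{G}\;\le\;\mathsf D(G\oplus C_d)
\]
and then verify $\mathsf D(G\oplus C_d)=\mathsf D^*(G\oplus C_d)$ under the hypothesis of each sub-case, after which invariant-factor arithmetic yields the displayed values. For the upper bound I would use the diagonal embedding $\varphi\colon G\to G\oplus C_d$, $g\mapsto (g,\bar 1)$ with $\bar 1$ of order $d$: a nonempty $T\t S$ satisfies $\sigma(T)=0$ and $d\t |T|$ precisely when $\varphi(T)$ is a nonempty zero-sum in $G\oplus C_d$, so $|S|\ge\mathsf D(G\oplus C_d)$ forces the existence of such a $T$. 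For the lower bound I would exhibit explicit sequences of the extremal length $\mathsf D^*(G\oplus C_d)-1$: for $G=C_n=\langle e\rangle$ take $S=e^{\lcm(n,d)-1}\cdot 0^{\gcd(n,d)-1}$; for a $p$-group $G$ with basis $(e_1,\dots,e_r)$ of orders $p^{a_1},\dots,p^{a_r}$ take $S=\prod_ie_i^{p^{a_i}-1}\cdot 0^{d-1}$ in cases (2a) and (2b), or $S=\prod_{i<r}e_i^{p^{a_i}-1}\cdot e_r^{\lcm(n,d)-1}\cdot 0^{\gcd(n,d)-1}$ in case (2c). In each case, a direct calculation (using that $\sum k_ie_i=0$ forces $p^{a_i}\mid k_i$ for all $i$) shows that no nonempty zero-sum subsequence has length divisible by $d$.

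\textbf{Easy sub-cases.} If $G=C_n$, then $G\oplus C_d$ has rank at most two, so the classical rank-two result gives $\mathsf D=\mathsf D^*$; its invariant factors are $\gcd(n,d)\t\lcm(n,d)$, completing (1). If $d=p^\alpha$, then $G\oplus C_d$ is a $p$-group, and Olson's theorem gives $\mathsf D=\mathsf D^*$; invariant-factor bookkeeping (using $\alpha\ge\mathsf v_p(\exp(G))$ when $\mathsf D^*(G)\le p^\alpha$) produces $\mathsf D^*(G)+d-1$, completing (2a).

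\textbf{Hard sub-cases.} Write $d=p^\alpha d'$ with $\gcd(d',p)=1$ and let $H_p:=G\oplus C_{p^\alpha}$, so $G\oplus C_d\cong H_p\oplus C_{d'}$. Under (b), $p^\alpha\ge\mathsf D^*(G)\ge\exp(G)$ gives $\exp(H_p)=p^\alpha$ and $\mathsf D^*(H_p)\le 2p^\alpha-1$; under (c), $p^\alpha\le 2n-\mathsf D^*(G)$ forces $\alpha\le\mathsf v_p(n)$, so $\exp(H_p)=n$ and $\mathsf D^*(G)\le 2n-1$. In both cases Olson yields $\mathsf D(H_p)=\mathsf D^*(H_p)$, but $H_p\oplus C_{d'}$ is not a $p$-group, so I would argue directly on $\zsm{d}{G}$ via iterative extraction. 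Under (b), a Gao-type EGZ strengthening (available because $\mathsf D^*(G)\le p^\alpha$) permits peeling off $d'$ disjoint zero-sum subsequences of $S$ of length exactly $p^\alpha$, whose concatenation gives a zero-sum of length $p^\alpha d'=d$. Under (c), the hypothesis yields $\mathsf s(G)=\mathsf D(G)+\exp(G)-1$, so one can peel off zero-sum subsequences of $S$ of length $n$ (and, as needed, some of length $p^\alpha$); applying $\mathsf D(C_{d'})=d'$ to the residues $|T_i|/p^\alpha\pmod{d'}$ of $d'$ such extractions produces a subcollection whose total length is divisible by $d$, yielding the required subsequence.

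\textbf{Main obstacle.} The technical heart is the equal-length extraction argument of (2b) and (2c): extracting, from a sequence of length just $\mathsf D^*(G\oplus C_d)$, enough zero-sum subsequences of prescribed length $p^\alpha$ or $n$ to combine into a length divisible by $d$. This requires EGZ-type refinements of Olson's theorem for the $p$-group $G$, and the quantitative hypotheses on $\mathsf D^*(G)$ relative to $p^\alpha$ (in (b)) and to $2n-\mathsf D^*(G)$ (in (c)) are tailored precisely to make these refinements available. Managing the coordination of these extractions against the tight length budget is where I would expect the proof to be most delicate.
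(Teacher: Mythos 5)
Your overall frame---the two-sided bound $\mathsf D^*(G\oplus C_d)\le\zsm{d}{G}\le\mathsf D(G\oplus C_d)$ with the lower bound from explicit extremal sequences and the upper bound from the embedding $g\mapsto g+e$---is exactly Proposition~\ref{onsd_prop_bounds}, and your treatment of part~1 and part~2(a) coincides with the paper's. The divergence is in 2(b) and 2(c), where the paper needs no combinatorial extraction at all: it invokes a third known equality you do not use, namely $\mathsf D(P\oplus C_m)=\mathsf D^*(P\oplus C_m)$ whenever $P$ is a $p$-group with $\mathsf D^*(P)\le 2\exp(P)-1$ and $p\nmid m$; the hypotheses $\mathsf D^*(G)\le p^{\mathsf v_p(d)}$ in (b) and $p^{\mathsf v_p(d)}\le 2n-\mathsf D^*(G)$ in (c) are precisely what force the $p$-component of $G\oplus C_d$ to satisfy this condition, so both cases again follow from Proposition~\ref{onsd_prop_bounds}. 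Your direct route for (b) can be repaired, but not by citation: the statement $\mathsf s_{\{m\}}(G)=\mathsf D(G)+m-1$ is known for $m\ge |G|$, not for all multiples $m\ge\mathsf D(G)$ of $\exp(G)$, so the ``Gao-type EGZ strengthening'' you invoke is not available off the shelf. You must derive it from your own case (a): restrict to a window of length exactly $\mathsf D^*(G)+p^{\alpha}-1\le 2p^{\alpha}-1$; the nonempty zero-sum subsequence of length divisible by $p^{\alpha}$ guaranteed there must have length exactly $p^{\alpha}$, and the budget $|S|\ge\mathsf D^*(G)+d-1$ then permits $d'$ disjoint such blocks. With that spelled out, (b) is a legitimate alternative proof.

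Case (c), however, has a genuine gap. The identity $\mathsf s(G)=\mathsf D(G)+\exp(G)-1$ on which your extraction rests is false: for $G=C_p\oplus C_p$ (where hypothesis (c) holds for every $d$ with $p\nmid d$) one has $\mathsf s(G)=4p-3>3p-2$. The correct available bound under $\mathsf D(G)\le 2n-1$ is only $\mathsf s(G)\le\mathsf D(G)+2n-2$, and it requires $p$ odd, whereas case (c) carries no parity restriction---the paper's Davenport-constant route works for $p=2$ as well. Even granting an EGZ-type bound, the length budget does not close: with $p^{\beta}=p^{\mathsf v_p(d)}$ and $d=p^{\beta}d'$, your sequence has length $\mathsf D^*(G)+(d'-1)n+p^{\beta}-1$, and at the last of the $d'-1$ exact-length-$n$ extractions the remainder is only $\mathsf D^*(G)+n+p^{\beta}-1$, which can be strictly smaller than $\mathsf s(G)$ (e.g.\ $G=C_3\oplus C_9$, $p^{\beta}=1$: remainder $20<21=\mathsf s(G)$); while blocks produced via $\zsm{p^{\beta}}{G}$ or $\zsm{n}{G}$ are only guaranteed lengths up to $2n-p^{\beta}$, so the naive iteration needed to reach $d'$ disjoint blocks (before applying $\mathsf D(C_{d'})=d'$ to the residues) would require $p^{\beta}\ge n$, i.e.\ essentially the cyclic case. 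Rescuing a direct argument requires the more delicate partial-sum/residue juggling the paper reserves for the rank-two Theorem~\ref{thm-rank2} in Section~\ref{sec-rank2}; for Theorem~\ref{onsd_thm}.2(c) you should instead, as the paper does, verify $\mathsf D(G\oplus C_d)=\mathsf D^*(G\oplus C_d)$ via the cited result and conclude by Proposition~\ref{onsd_prop_bounds}.
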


The strategy to prove this result is to bound $\zsm{d}{G}$, for
generic $d$ and $G$, in terms of the invariants
$\mathsf{D}^{\ast}(\cdot)$ and $\mathsf{D}(\cdot)$, and then to make
these `abstract bounds' explicit invoking results on the Davenport
constant. We remark that Theorem \ref{onsd_thm}.2(a) for $\alpha=1$ can also be derived
as a special case of \cite[Theorem 2.3]{Gi12a}, proved via the Combinatorial Nullstellensatz.
The first part is carried out in
Proposition \ref{onsd_prop_bounds}. However, since the lower bound given there is in terms of $\mathsf D^*(G\oplus C_d)$, we begin first with the following lemma showing how to calculate $\mathsf D^*(G\oplus C_d)$ explicitly.

\medskip
\begin{lemma}\label{D*lemma} Let $d \in \N$ and let $G\cong C_{n_1}\oplus\cdots\oplus C_{n_r}$ with $1<n_1\mid \cdots\mid n_r$. Set  $n_0=1$ and $n_{r+1}=0$. Then $G\oplus C_d\cong C_{m_0}\oplus\cdots\oplus C_{m_r}$ with $1\leq m_0\mid\cdots\mid m_r$, so that $$\mathsf D^*(G\oplus C_d)=\Sum{i=0}{r}(m_i-1)+1,$$ where $$m_i=n_i\frac{\gcd(n_{i+1},d)}{\gcd(n_{i},d)}=\gcd(n_{i+1},\lcm(n_i,d))=\lcm(n_i,\gcd(n_{i+1},d))\quad \mbox{ for } i\in [0,r].$$
\end{lemma}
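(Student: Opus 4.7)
The plan is to verify three claims: (i) the three expressions for $m_i$ are equal, (ii) the divisibility chain $1 \le m_0 \mid m_1 \mid \cdots \mid m_r$ holds, and (iii) the isomorphism $G \oplus C_d \cong \bigoplus_{i=0}^{r} C_{m_i}$ holds. Once (iii) is established, the stated formula for $\mathsf D^*(G \oplus C_d)$ is immediate from its definition.

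For (i), I would first note that $n_i \mid n_{i+1}$ for every $i \in [0,r]$, using $n_0 = 1$ at the bottom and the convention $n_r \mid 0 = n_{r+1}$ at the top. Then the standard distributive law in the $\gcd/\lcm$-lattice yields
\[
\gcd(n_{i+1}, \lcm(n_i, d)) = \lcm(\gcd(n_{i+1}, n_i), \gcd(n_{i+1}, d)) = \lcm(n_i, \gcd(n_{i+1}, d)),
\]
and one application of $\lcm(x,y) = xy/\gcd(x,y)$ (together with $n_i \mid n_{i+1}$, which gives $\gcd(n_i, \gcd(n_{i+1}, d)) = \gcd(n_i, d)$) rewrites the right-hand side as $n_i \gcd(n_{i+1}, d)/\gcd(n_i, d)$.

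For (ii) and (iii), the natural strategy is to work prime by prime. Fix a prime $p$ and set $a_i = \mathsf v_p(n_i)$ for $i \in [0, r+1]$ (so $a_0 = 0$, $a_{r+1} = \infty$, and $a_1 \le \cdots \le a_r$) and $b = \mathsf v_p(d)$. From the first expression for $m_i$ established in (i), we compute $\mathsf v_p(m_i) = \min(a_{i+1}, \max(a_i, b))$. The monotonicity $\mathsf v_p(m_i) \le \mathsf v_p(m_{i+1})$ is then immediate from $a_{i+1} \le a_{i+2}$ and $\max(a_i, b) \le \max(a_{i+1}, b)$, which settles (ii). For (iii), a brief case analysis on where $b$ lies relative to the chain $a_1 \le \cdots \le a_r$ shows that the sequence $(\mathsf v_p(m_i))_{i=0}^r$ is exactly the nondecreasing rearrangement of the multiset $\{a_1, \ldots, a_r, b\}$; hence the $p$-primary component of $\bigoplus_{i=0}^r C_{m_i}$ coincides with that of $G \oplus C_d$ for every prime $p$, and the isomorphism follows from the structure theorem for finite abelian groups.

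The argument is essentially bookkeeping; the only step requiring some care is the case analysis in (iii) at the boundaries $i = 0$ and $i = r$, where the conventions $n_0 = 1$ (governing the situation $b \le a_1$) and $n_{r+1} = 0$ (governing $b \ge a_r$) must be handled explicitly. No single step presents a real obstacle.
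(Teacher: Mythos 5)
Your proposal is correct and follows essentially the same route as the paper: both arguments reduce to computing $p$-adic valuations prime by prime and observing that the valuations of the $m_i$ form the sorted merge of $\mathsf v_p(n_1)\le\cdots\le\mathsf v_p(n_r)$ with $\mathsf v_p(d)$ (the paper runs this in the opposite direction, starting from the invariant-factor decomposition of $G\oplus C_d$ and verifying the closed formula against it). Your use of the distributive law in the gcd/lcm lattice to identify the three expressions for $m_i$ is a slightly cleaner justification of a step the paper only asserts, but it is not a different method.
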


\begin{proof}
Letting $p_1,\ldots,p_k$ be the distinct prime divisors $p_i$ of $n_r$, we have $$G\cong \bigoplus_{i=1}^k\bigoplus_{j=1}^rC_{p_i^{s_{i,j}}},$$ where $0\leq s_{i,1}\leq\ldots\leq s_{i,r}$ for all $i\in [1,k]$ and $p_1^{s_{1,j}}\cdot\ldots\cdot p_k^{s_{k,j}}=n_j$ for all $j\in [1,r]$. Let $$C_d\cong C_m\oplus \bigoplus_{i=1}^{k}C_{p_i^{s'_i}},$$ where $\mathsf{v}_{p_i}(d)=s'_i\ge 0$ and $d=mp_1^{s'_1}\cdots p_k^{s'_k}$. Recall $n_0=1$,
$n_{r+1}=0$ and write \be\label{ants}G\oplus C_d\cong
C_{m_0}\oplus\ldots \oplus C_{m_r}\ee with $1\leq  m_0 \mid \dots \mid m_r$.
Then \be\label{wooster}\vp_{p_i}(m_j)=\left\{
                                                                  \begin{array}{ll}
                                                                    \vp_{p_i}(n_j)=s_{i,j}, & \hbox{if } \vp_{p_i}(d)\leq \vp_{p_i}(n_j),\\
 \vp_{p_i}(d)=s'_{i}, & \hbox{if }\vp_{p_i}(n_j)\leq \vp_{p_i}(d)\leq \vp_{p_i}(n_{j+1}),\\
\vp_{p_i}(n_{j+1})=s_{i,j+1}, & \hbox{if }   \vp_{p_i}(n_{j+1})\leq \vp_{p_i}(d),                                                                \end{array}
                                                                \right.\ee
for $j\in [0,r]$ and $i\in [1,k]$.

We claim that \be\label{booster}m_j=n_j\frac{\gcd(n_{j+1},d)}{\gcd(n_{j},d)},\ee  for $j\in [0,r]$. Indeed, since $m_r=\exp(G\oplus C_d)=n_r\frac{d}{\gcd(n_r,d)}$, this is clear for $j=r$. To see this also holds for $j<r$, it suffices to see  $\vp_{p_i}\left(n_j\frac{\gcd(n_{j+1},d)}{\gcd(n_{j},d)}\right)$ agrees with \eqref{wooster} for each $i\in [1,k]$. However, $$\vp_{p_i}\left(n_j\frac{\gcd(n_{j+1},d)}{\gcd(n_{j},d)}\right)=\vp_{p_i}(n_j)+\min\{\vp_{p_i}(d),\,\vp_{p_i}(n_{j+1})\}-
\min\{\vp_{p_i}(d),\,\vp_{p_i}(n_j)\},$$ which is easily seen to agree with \eqref{wooster} in all three cases, completing the claim. Thus from \eqref{ants} we conclude that $$\mathsf D^*(G\oplus C_d)=\Sum{i=0}{r}(m_i-1)+1.$$ Moreover, in view of \eqref{booster}, one sees that the expression for the $m_i$ can be rewritten as $$m_i=\gcd(n_{i+1},\lcm(n_i,d))=\lcm(n_i,\gcd(n_{i+1},d))\quad \mbox{ for } i\in [0,r],$$ which completes the proof.
\end{proof}

\medskip
\begin{proposition}
\label{onsd_prop_bounds}
Let $d \in \N$.
Then $$\mathsf D^*(G)+d-1\leq \mathsf D^*(G\oplus C_d)\leq \mathsf s_{d\N}(G)\leq \mathsf D(G\oplus C_d)$$ and $$\mathsf D(G)+d-1\leq  \mathsf s_{d\N}(G).$$
In particular, if $\mathsf{D}^{\ast}(G\oplus C_d) = \mathsf{D}(G\oplus C_d)$, then  $\zsm{d}{G} = \mathsf{D}^{\ast}(G\oplus C_d)$.
\end{proposition}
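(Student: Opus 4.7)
The plan is to prove the four inequalities of the chain separately; the ``in particular'' clause then follows by sandwiching, since under the hypothesis the outer bounds coincide.

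The two inequalities involving the large Davenport constant $\mathsf D$ are straightforward. For $\mathsf D(G)+d-1\le \mathsf{s}_{d\N}(G)$, I would take a zero-sum free sequence $S_1$ over $G$ of length $\mathsf D(G)-1$ and append $0^{d-1}$; every zero-sum subsequence of the resulting length-$(\mathsf D(G)+d-2)$ sequence consists only of zeros, so has length in $[0,d-1]$ and hence is not a nonempty zero-sum of length divisible by $d$. For $\mathsf{s}_{d\N}(G)\le \mathsf D(G\oplus C_d)$, I would use the standard lifting trick: given any $S=g_1\cdots g_\ell$ over $G$ with $\ell\ge \mathsf D(G\oplus C_d)$, the sequence $T=(g_1,1)\cdots(g_\ell,1)$ over $G\oplus C_d$ admits a nonempty zero-sum subsequence, and reading off the second coordinate shows that the corresponding subsequence of $S$ is a nonempty zero-sum of length divisible by $d$.

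For $\mathsf D^*(G)+d-1\le \mathsf D^*(G\oplus C_d)$ I would argue directly from Lemma \ref{D*lemma}. Setting $g_i=\gcd(n_i,d)$ with the conventions $g_0=1$ and $g_{r+1}=d$, the lemma yields $m_i=n_ig_{i+1}/g_i$, whence
\[
\mathsf D^*(G\oplus C_d) - \mathsf D^*(G) - (d-1) \;=\; g_1 + \sum_{i=1}^r n_i\cdot\frac{g_{i+1}-g_i}{g_i} - d.
\]
Since $g_i\mid g_{i+1}$ (as $n_i\mid n_{i+1}$) and $n_i/g_i\ge 1$, each summand is bounded below by $g_{i+1}-g_i$, so the right-hand side is at least $g_1 + (d-g_1) - d = 0$ by telescoping.

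The main obstacle is the middle inequality $\mathsf D^*(G\oplus C_d)\le \mathsf{s}_{d\N}(G)$, which calls for an explicit construction. Fixing a basis $e_1,\ldots,e_r$ of $G$ with $\ord(e_i)=n_i$, I would consider
\[
S \;=\; 0^{m_0-1}\cdot\prod_{i=1}^r e_i^{m_i-1},
\]
of length $\mathsf D^*(G\oplus C_d)-1$, and verify that $S$ has no nonempty zero-sum subsequence of length divisible by $d$. Any zero-sum subsequence of $S$ has the form $0^{a_0}\prod e_i^{a_i}$ with $n_i\mid a_i$; writing $a_i=k_in_i$ and using $m_i/n_i=g_{i+1}/g_i$ from Lemma \ref{D*lemma}, the range constraints become $a_0<g_1$ and $k_i<g_{i+1}/g_i$. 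The length condition $a_0+\sum k_in_i\equiv 0\pmod d$ is then handled inductively: reducing modulo $g_1$ (which divides every $n_i$ for $i\ge 1$) forces $a_0=0$; then for each $j=1,\ldots,r$ in turn, reducing the remaining identity modulo $g_{j+1}$ (which divides every $n_i$ for $i\ge j+1$) yields $g_{j+1}\mid k_jn_j$, equivalently $(g_{j+1}/g_j)\mid k_j$ since $\gcd(n_j,g_{j+1})=\gcd(n_j,n_{j+1},d)=g_j$, and combined with the sharp bound $k_j<g_{j+1}/g_j$ this forces $k_j=0$. Hence only the empty subsequence has length divisible by $d$.
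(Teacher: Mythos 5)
Your proposal is correct and follows essentially the same route as the paper: the same extremal sequence $0^{m_0-1}\prod_{i=1}^r e_i^{m_i-1}$ from Lemma \ref{D*lemma} (your successive reduction modulo the $g_{j+1}$ is the paper's minimal-nonzero-index argument in disguise, using the same coprimality $\gcd\bigl(g_{j+1}/g_j,\,n_j/g_j\bigr)=1$), the same lifting $g\mapsto g+e$ into $G\oplus C_d$ for the upper bound, and the same $0^{d-1}S$ construction for $\mathsf D(G)+d-1\le \mathsf s_{d\N}(G)$. Your telescoping verification of $\mathsf D^*(G)+d-1\le \mathsf D^*(G\oplus C_d)$ is merely a more explicit write-up of the minimization the paper dismisses as ``easily verified,'' so no genuinely different method is involved.
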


\begin{proof}
Write $G\cong C_{n_1}\oplus \ldots\oplus C_{n_r}$, where $1< n_1 \mid \dots \mid n_r$, let $n_0=1$ and $n_{r+1}=0$, and let $e_1, \dots , e_r \in G$ be such that $G =
\oplus_{i=1}^r \langle e_i \rangle$ with $ \ord(e_i)=n_i$. By Lemma \ref{D*lemma}, we know
\be\label{DDD}\mathsf D^*(G\oplus C_d)=\Sum{i=0}{r}(m_i-1)+1,\ee where $$m_i=n_i\frac{\gcd(n_{i+1},d)}{\gcd(n_{i},d)}=\gcd(n_{i+1},\lcm(n_i,d))=\lcm(n_i,\gcd(n_{i+1},d))\quad \mbox{ for } i\in [0,r].$$

We begin by showing $\mathsf D^*(G)+d-1\leq \mathsf D^*(G\oplus C_d)$.
From \eqref{DDD}, we have $$\mathsf D^*(G\oplus C_d)=\Sum{i=0}{r}m_i-r=\Sum{i=0}{r}d_in_i-r,$$ where \be\label{bees}d_i=\frac{\gcd(n_{i+1},d)}{\gcd(n_{i},d)}\quad\mbox{ for }i\in [0,r].\ee Observing that $d_0\cdots d_r=d$ with $d_i\in [1,d]$ for all $i$, and noting that $d_0n_0=d_0=\gcd(n_1,d)\mid n_j$ for all $j$, so that $d_0n_0\leq n_j$, it is easily verified that the above expression is minimized when $d_0=d$ and $d_i=1$ for $i\geq 1$, in which case  $\mathsf D^*(G\oplus C_d)\geq d+\Sum{i=1}{r}n_i-r=\mathsf D^*(G)+d-1$, as desired.

Next, we show $\mathsf D(G)+d-1\leq  \mathsf s_{d\N}(G).$
By definition of $\mathsf{D}(G)$, there exists a zero-sum free sequence $S \in \mathcal{F}(G)$ with $|S|= \mathsf{D}(G)-1$.
We consider the sequence $0^{d-1}S$. Clearly, the only nonempty zero-sum subsequences of $0^{d-1}S$
are the sequences $0^k$ with $k \in [1,d-1]$.
Thus $\zsm{d}{G} > |0^{d-1}S|= \mathsf{D}(G)+d-2$, establishing our claim.

We proceed to show the remaining lower bound $\mathsf D^*(G\oplus C_d)\leq \mathsf s_{d\N}(G)$.
Let  $e_0=0$ and let $S=\prod_{i=0}^r e_i^{m_i-1}$. From \eqref{DDD}, we have $|S|=\mathsf{D}^{\ast}(G \oplus C_d)-1$.
Consider  $T\mid S$ with $\sigma(T)=0$ and $d\mid |T|$.
We will show that $|T|=0$, establishing the lower bound.
Let $v_i = \mathsf{v}_{e_i}(T)$ for $i\in [0,r]$. We note that $n_i=\ord(e_i) \mid v_i$ for each $i$, and we set $x_i= v_i/n_i$.
By the very definition, we have
\[|T|= \sum_{i=0}^r x_i n_i.\]
Note that $v_i=x_in_i< m_i$ (as $\vp_{e_i}(S)<m_i$ with $T\mid S$), and thus $x_i\in [0,d_i -1]$ for each $i$.
We have to show that $x_i=0$ for each $i$. Assume not, and let $j\in [0,r]$ be minimal with $x_j \neq 0$.
Since \[|T|=\Sum{i=1}{r}x_in_i=\sum_{i=j}^r x_i n_i\] is divisible by $d$,
we get that (for $j=r$, the right-hand side below is $0$)
\[x_jn_j \equiv -n_{j+1}\sum_{i=j+1}^r x_i \frac{n_i}{n_{j+1}} \pmod{d},\]
and thus $\gcd(n_{j+1},d)\mid x_jn_j$.
Consequently, $\frac{\gcd(n_{j+1},d)}{\gcd(n_j,d)}\mid \frac{x_jn_j}{\gcd(n_j,d)}$, whence \eqref{bees} implies
\[d_j \ \big \vert \ x_j \frac{n_j}{\gcd(n_j,d)}.\]
Noting from \eqref{bees} that
\[\gcd(d_j,  \frac{n_j}{\gcd(n_j,d)})=1,\]
it follows that $d_j \mid x_j$, which in view of $x_j \in [0, d_j - 1]$ implies $x_j=0$.
This contradicts  the definition of $x_j$ and completes the argument.

It remains to show  $\mathsf s_{d\N}(G)\leq \mathsf D(G\oplus C_d)$. Let $S \in \mathcal{F}(G)$ with $|S|\ge \mathsf{D}(G
\oplus C_d)$. We have to show that $S$ has a nonempty zero-sum
subsequence of length congruent to $0$ modulo $d$. Let $e\in G
\oplus C_d$ be such that $G \oplus C_d = G \oplus \langle e \rangle$,
and let $\iota \colon G \to G \oplus C_d$ denote the map defined via
$\iota(g)=g +e$.
Since $|\iota(S)|=|S|\geq \mathsf D(G\oplus C_d)$, applying the definition of $\mathsf D(G\oplus C_d)$ to $\iota(S)$ yields
a nonempty subsequence $T\mid S$ with $0=\sigma(\iota(T))=\sigma(T)+|T|e$. Hence $T$ is a zero-sum subsequence with length
$|T|$ divisible by
$\ord(e)=n$, as desired.
\end{proof}

Now we prove Theorem \ref{onsd_thm}. We need the following
well-known results on the Davenport constant, which will be used later in the paper as well. Namely,
$\mathsf{D}(G)=\mathsf{D}^{\ast}(G)$ if $G$ satisfies any one
of the following conditions (see \cite{Ge09a}, specifically Theorems 2.2.6 and 4.2.10 and
Corollary 4.2.13):
\begin{itemize}
\item $G$ has rank at most two.
\item $G$ is a $p$-group.
\item $G\cong G' \oplus C_n$ where $G'$ is a $p$-group with $\mathsf{D}^{\ast}(G')\le 2 \exp(G')-1$ and $p\nmid n$.
\end{itemize}

\medskip
\begin{proof}[Proof of Theorem \ref{onsd_thm}]
1. As $G$ is cyclic, it follows from Lemma \ref{D*lemma} that $G\oplus C_d \cong C_{\gcd(n,d)} \oplus C_{\lcm(n,d)}$.
Thus, by the above mentioned results, we know that $\mathsf{D}(C_{\gcd(n,d)} \oplus C_{\lcm(n,d)})=\mathsf{D}^{\ast}(C_{\gcd(n,d)} \oplus C_{\lcm(n,d)})=
\gcd(n,d)+\lcm(n,d)-1$, whence Proposition \ref{onsd_prop_bounds} completes the proof of part 1.

\smallskip
\noindent
2. Let $H$ be a group such that $G\cong H \oplus C_{n}$. As $G$ is a $p$-group, it follows from Lemma \ref{D*lemma} that  $$G \oplus C_{d}\cong H \oplus C_{\gcd(n,d)} \oplus C_{\lcm(n,d)}$$ with $\lcm(n,d)$ the exponent of $G \oplus C_{d}$. Consequently, since $G$ is a $p$-group, it follows that $\mathsf{D}^{\ast} (G \oplus C_d) = \mathsf{D}^{\ast}(H) + \gcd(n,d) + \lcm(n,d) - 2$.
Observe that this quantity is equal to the value we claim for $\zsm{d}{G}$ in each of the points (a), (b), and (c), with this being the case in  (b) since $p^{\vp_p(d)}\geq \mathsf D^*(G)\geq n$ with $n$ being a power of $p$ (as $G$ is a $p$-group) implies $\lcm(n,d)=d$ and $\gcd(n,d)=n$.
Thus, again, by Proposition \ref{onsd_thm} it suffices to show that $\mathsf{D} (G \oplus C_d) =  \mathsf{D}^{\ast} (G \oplus C_d)$.
For (a), \ $G\oplus C_d$ is a $p$-group and the claim is immediate by the above mentioned result for $p$-groups.
For (b) and (c), let $\alpha_1 \in \N_0$ be such that $\gcd(n,d)=p^{\alpha_1}$ and let $\alpha_2= \mathsf{v}_p(\lcm(n,d))$.

Suppose the hypotheses of (b) hold. Then $n\leq \mathsf D^*(G)\leq p^{\vp_p(d)}$ so that $p^{\alpha_2}=p^{\vp_p(d)}$ and $p^{\alpha_1}=n$. Hence, using the hypothesis $n\leq \mathsf D^*(G)\leq p^{\vp_p(d)}=p^{\alpha_2}$ once more, we find that $$\mathsf D(H\oplus C_{p^{\alpha_1}} \oplus C_{p^{\alpha_2}})=\mathsf D^*(H\oplus C_{p^{\alpha_1}} \oplus C_{p^{\alpha_2}})=\mathsf D^*(G)+p^{\alpha_2}-1 \leq p^{\vp_p(d)}+p^{\alpha_2}-1=2p^{\alpha_2}-1.$$ Thus the $p$-group $H\oplus C_{p^{\alpha_1}} \oplus C_{p^{\alpha_2}}$ fulfils the conditions imposed in the last of the above mentioned results,
completing the proof of (b).

Suppose the hypotheses of (c) hold. If $p^{\alpha_2}=p^{\vp_p(d)}$, then $n\leq p^{\vp_p(d)}$, whence the hypothesis of (c) implies $\mathsf D^*(G)\leq n$. As a result, since $n\leq \mathsf D^*(G)$ with equality if and only if $G$ is cyclic, we conclude that $G$ is cyclic. Consequently, $G\oplus C_d$ has rank at most $2$, so that $\mathsf D^*(G\oplus C_d)=\mathsf D(G\oplus C_d)$ by the first of the above mentioned results, and now the result follows from Proposition \ref{ind_thm_bound}. Therefore it remains to consider the case when $p^{\alpha_2}=n$ and $p^{\alpha_1}=p^{\vp_p(d)}$. In this case, the hypothesis of (c) instead implies$$\mathsf D(H\oplus C_{p^{\alpha_1}} \oplus C_{p^{\alpha_2}})=\mathsf D^*(H\oplus C_{p^{\alpha_1}} \oplus C_{p^{\alpha_2}})=\mathsf D^*(G)+p^{\alpha_1}-1 \leq 2n-1=2p^{\alpha_2}-1.$$ Thus the $p$-group $H\oplus C_{p^{\alpha_1}} \oplus C_{p^{\alpha_2}}$ fulfils the conditions imposed in the last of the above mentioned results,
completing the proof of (c).
\end{proof}

Several results on the
Davenport constant, in addition to those already recalled,  are known (see, e.g., \cite{Ga-Ge06b} for an
overview). Essentially, each of them allows one to obtain some additional
insight on $\zsm{d}{G}$ via Proposition \ref{onsd_prop_bounds}. For example,
it is conjectured that $\mathsf{D}^{\ast}(G)= \mathsf{D}(G)$ for
groups of rank three (see \cite[Conjecture 3.5]{Ga-Ge06b}; and
\cite{Bh-SP07a} and \cite{Sc10c} for recent results, confirming this
conjecture in special cases). If this were the case, then, for groups
of rank two, $\zsm{d}{G}=\mathsf D^*(G\oplus C_d)$ would immediately follow from Proposition \ref{onsd_prop_bounds} for all $d\in \N$. In Section \ref{sec-rank2}, we will show this equality holds without the use of the conjectured value of $\mathsf D(G)$ for rank three groups, which could be construed as giving weak evidence for the supposed value.

Of course, the two invariants $\mathsf{D}(\cdot)$ and
$\mathsf{D}^{\ast}(\cdot)$  are not equal for all finite abelian
groups, and there are examples of pairs $(d,G)$ for which the bounds
in Proposition \ref{onsd_prop_bounds} do not coincide, i.e., $$\max
\{\mathsf{D}(G) + d - 1, \mathsf{D}^{\ast}(G\oplus C_d)\}<
\mathsf{D}(G\oplus C_d);$$ see \cite{Ge-Sc92, Ge-Li-Ph11} for more
information on the phenomenon of inequality of $\mathsf{D}(\cdot)$
and $\mathsf{D}^{\ast}(\cdot)$. However, it is conjectured that the
difference between $\mathsf{D}(G)$ and $\mathsf{D}^{\ast}(G)$ is
fairly small for any $G$ (in a relative sense)---indeed, there is a
conjecture that asserts that this difference is at most
$\mathsf{r}(G)-1$ (see \cite[Conjecture 3.7]{Ga-Ge06b})---and thus
the combination of the bounds of Proposition \ref{onsd_prop_bounds} would in
general yield a good approximation for $\zsm{d}{G}$.

\bigskip
\section{Results when $\mathsf{D}(G_p) \le 2 \exp(G_p) - 1$}
\label{sec_ind}
\bigskip

We use the inductive method to obtain upper bounds on
$\mathsf{D}(G)$, $\mathsf{s}(G)$, $\eta(G)$ and $\zsm{d}{G}$,
imposing conditions on the $p$-subgroups of $G$. These conditions
are fulfilled, in particular, for groups of rank at most two. Recall
that the question of whether or not $\mathsf s (C_p \oplus C_p) \le
4p-3$ holds for all primes $p \in \mathbb P$ was open for more then
20 years (the Kemnitz Conjecture), and finally solved by C. Reiher
\cite{Re07a}. His result was then generalized to arbitrary groups of
rank two \cite[Theorem 5.8.3]{Ge-HK06a}, and to $p$-groups $G$ with
$\mathsf D (G) \le 2 \exp (G)-1$ \cite[Theorem 1.2]{Sc-Zh09a}. We
refer to \cite[Section 4]{Ge09a} for a survey on the Erd{\H
o}s--Ginzburg--Ziv constant, and to \cite{Ka-Pa-Sa09a, Ka-Pa-Sa10a}
for some recent connections. The upper bound for
$\mathsf{s}_{n\mathbb{N}}(G)$ for groups of rank two was first given
in \cite[Theorem 6.7]{Ga-Ge06b}. Note that the upper bound
$\mathsf{\eta}(G) \le 3n-2$ is precisely what is needed in various
applications (see for example \cite{Ge-HK06a}).

\medskip
\begin{theorem}
\label{ind_thm_bound} Let  $\exp(G)=n$.  Suppose that, for each $p
\in \P$, we have $\mathsf{D}(G_p) \le 2\exp(G_p)-1$.
\begin{enumerate}
\item The following inequalities hold\,{\rm :}
      \begin{enumerate}
      \item $\mathsf{D}(G) \le 2n-1$.
      \item $\mathsf{s}_{n\mathbb{N}}(G)\le 3 n - 2$.
            \end{enumerate}

\smallskip
\item If $\exp(G)$ is odd, then the following inequalities hold\,{\rm :}
      \begin{enumerate}
      \item $\mathsf{\eta}(G) \le 3n-2$.
      \item $\mathsf{s}(G) \le 4n-3$.
      \end{enumerate}
\end{enumerate}
\end{theorem}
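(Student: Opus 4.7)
The plan is to argue by induction on the number of distinct prime divisors of $n=\exp(G)$. For the base case this number is $1$, so $G=G_p$ is a $p$-group: part (1a) is the hypothesis, and the remaining bounds follow from the $p$-group results recalled in the paragraph preceding the theorem, namely $\zsm{p^a}{G_p}\le 3p^a-2$, $\eta(G_p)\le 3p^a-2$, and $\mathsf{s}(G_p)\le 4p^a-3$, all valid for $p$-groups with $\mathsf{D}(G_p)\le 2p^a-1$ by the theorems of Reiher and Schmid--Zhuang.

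For the inductive step I would fix any prime $p\mid n$, set $a=\vp_p(n)$ and $m=n/p^a$, and decompose $G=G_p\oplus G'$ with $G'=\bigoplus_{q\ne p}G_q$. Then $\exp(G')=m$ has one fewer distinct prime divisor, and $(G')_q=G_q$ for every $q\ne p$, so $G'$ inherits the hypothesis; by induction, each of the four bounds already holds for $G'$. Let $\pi\colon G\to G_p$ denote the canonical projection with kernel $G'$.

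I would then establish each bound by a two-layer extraction in the spirit of the classical inductive method. For (1b): given $S\in\mathcal{F}(G)$ with $|S|=3n-2$, iteratively apply $\zsm{p^a}{G_p}\le 3p^a-2$ to $\pi(S)$ to pull out zero-sum subsequences $T_i\mid S$ with $|T_i|\in p^a\mathbb N$ and $\sigma(T_i)\in G'$; the length of each $T_i$ is controlled by $\mathsf D(G_p\oplus C_{p^a})=\mathsf D^*(G_p)+p^a-1\le 3p^a-2$, which is valid for the $p$-group $G_p\oplus C_{p^a}$ by Lemma \ref{D*lemma} combined with the hypothesis. A careful count produces enough $T_i$'s so that the resulting sequence of sums in $G'$ has length at least $\zsm{m}{G'}$; the inductive bound $\zsm{m}{G'}\le 3m-2$ then yields a zero-sum subcollection of cardinality divisible by $m$, and the union of the corresponding $T_i$'s is a zero-sum subsequence of $S$ with length divisible by $p^am=n$. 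Part (2a) runs along identical lines with $\eta(G_p)\le 3p^a-2$ replacing the inner bound and $\eta(G')\le 3m-2$ the outer, yielding $|T_i|\in[1,p^a]$ and a final zero-sum subsequence of length in $[1,n]$. Part (1a) uses the same skeleton built on $\mathsf D(G_p)\le 2p^a-1$ and the inductive bound $\mathsf D(G')\le 2m-1$. Finally, (2b) will follow from (2a) via the standard inequality $\mathsf s(G)\le\eta(G)+\exp(G)-1$, giving $\mathsf s(G)\le(3n-2)+(n-1)=4n-3$.

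The main obstacle will be the length bookkeeping in the two-layer extraction. The extracted lengths $|T_i|$ are not constant but rather vary within a small range (for instance $\{p^a,2p^a\}$ in the $\zsm{p^a}{G_p}$ setting), and one must arrange the extraction so that at least the inductive threshold for $G'$ is met while simultaneously keeping the sums amenable to that bound; the tightness of $\mathsf D(G_p)\le 2p^a-1$ is what permits this balance. The odd-exponent hypothesis in part (2) enters through the base case: $\exp(G)$ odd forces $G_2=0$, so every $p$-component arising in the induction has $p$ odd, ensuring that the relevant $p$-group bounds on $\eta$ and $\mathsf s$ combine cleanly at every step.
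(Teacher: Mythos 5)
Your overall skeleton (induction on the number of primary components, a two-layer extraction, the $p$-group results as base case) is the paper's skeleton, and your argument for part 2(a) does go through; but the central step, part 1(b), has a genuine gap as outlined. After extracting blocks $T_i$ with $\sigma(T_i)\in G'$ and $|T_i|\in\{p^a,2p^a\}$, applying $\zsm{m}{G'}$ to the raw sums $\sigma(T_i)$ only gives a subcollection whose \emph{cardinality} is divisible by $m$; since the $|T_i|$ are not all equal to $p^a$, this does not make the total length $\sum_{i\in I}|T_i|$ divisible by $n=p^am$, so your final sentence of the 1(b) argument is simply false as stated. You flag this as ``bookkeeping,'' but it is the essential difficulty, and the device that resolves it is missing: the paper twists by $\iota\colon G\to G\oplus C_n$, $g\mapsto g+e$, so that $\sigma(\iota(S_i))=\sigma(S_i)+|S_i|e$ records the length modulo $n$, and then applies a \emph{Davenport-constant} bound to the twisted sums, which encodes sum-zero and length-congruence simultaneously. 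Moreover the roles in your extraction are assigned the wrong way around. If the outer step is performed in $G'$ (or, after twisting, in $G'\oplus C_m$), you would need $\mathsf D(G'\oplus C_m)\le 3m-2$; the hypothesis $\mathsf D(G_q)\le 2\exp(G_q)-1$ is not inherited by the components of $G'\oplus C_m$, and no such bound is available (for composite $G'$ this is essentially the open rank-three problem). The paper instead extracts $3m-3$ blocks of length \emph{exactly} $n/m$ using $\mathsf s(G/G_{p_s})\le 4n/m-3$ (part 2(b) applied to the odd quotient), one further block of length divisible by $n/m$ using the induction hypothesis $\zsm{n/m}{G/G_{p_s}}\le 3n/m-2$, and then applies $\mathsf D\bigl(G_{p_s}\oplus\langle (n/m)e\rangle\bigr)=\mathsf D(G_{p_s})+m-1\le 3m-2$, a $p$-group bound that follows directly from the hypothesis; this asymmetric assignment is also what handles the prime $2$, which your inner extraction cannot: exact-length blocks at $p=2$ would require Reiher/Schmid--Zhuang-type bounds that are not available for $2$-groups, while with flexible lengths $\{p^a,2p^a\}$ your count cannot even produce the $3m-2$ blocks you need from a sequence of length $3n-2$.

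Two further points. Your derivation of 2(b) from 2(a) via ``the standard inequality $\mathsf s(G)\le\eta(G)+\exp(G)-1$'' invokes the hard direction of Gao's conjecture; only $\mathsf s(G)\ge\eta(G)+\exp(G)-1$ is standard, so this step is unjustified (the paper proves the bound on $\mathsf s(G)$ directly, in parallel with $\eta(G)$, via the inductive inequalities of \cite[Proposition 5.7.11]{Ge-HK06a}). Finally, your separate ``same skeleton'' argument for 1(a) faces the same counting obstruction (blocks of length up to $2p^a-1$ cannot yield $\mathsf D(G')\le 2m-1$ sums from a sequence of length $2n-1$); the clean route, used in the paper, is to deduce 1(a) from 1(b) via the inequality $\mathsf D(G)+n-1\le\zsm{n}{G}$ of Proposition \ref{onsd_prop_bounds}.
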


In some cases, we are even able to establish the exact value of
these constants, though we have to impose more restrictive conditions. We do not include $\mathsf{D}(G)$ in the result
below since, in this case, an assertion of this form is well-known
(see the result mentioned before the proof of Theorem
\ref{onsd_thm}).

\begin{theorem}
\label{ind_thm_equ} Let  $\exp(G)=n$. Suppose there exists some odd
$q \in \P$ such that $\mathsf{D}(G_q)-  \exp(G_q)+1 \mid \exp(G_q)$
and $G_p$ is cyclic for each $p \in \P \setminus \{q\}$.
\begin{enumerate}
\item $\mathsf{\eta}(G) = 2(\mathsf{D}(G_q)-\exp(G_q)) + n$.

\item $\mathsf{s}(G) = 2(\mathsf{D}(G_q)-\exp(G_q)) + 2n - 1$.

\item \(\mathsf{s}_{d \mathbb{N}}(G)= \mathsf{D}(G_q)-  \exp(G_q) + \gcd(n,d)+\lcm(n,d)-1\)
      for each $d \in \N$ with  $(\mathsf{D}(G_q) - \exp(G_q)+1) \mid d$.
\end{enumerate}
\end{theorem}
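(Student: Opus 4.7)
The plan is to produce matching lower and upper bounds for all three invariants by exploiting the decomposition of $G$ forced by the hypothesis. Since $G_p$ is cyclic for each $p \neq q$, the non-$q$-part of $G$ is cyclic of some order $n'$ coprime to $n_q$, and hence $G \cong G_q \oplus C_{n'}$ with $n = n_q n'$. The hypothesis $m := \mathsf{D}(G_q) - n_q + 1 \mid n_q$ implies that $m$ is a power of $q$ and $\mathsf{D}(G_q) \leq 2 n_q - 1$; combined with the identity $\mathsf{D}(G_q) = \mathsf{D}^{\ast}(G_q)$ for $q$-groups, this forces $G_q$ to have rank at most $2$. In particular, the classical values $\eta(G_q) = 2(\mathsf{D}(G_q) - n_q) + n_q$ and $\mathsf{s}(G_q) = 2\mathsf{D}(G_q) - 1$ are at our disposal.

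The lower bounds follow quickly. For part 3, apply Lemma \ref{D*lemma} to compute $\mathsf{D}^{\ast}(G \oplus C_d) = \mathsf{D}(G_q) - n_q + \gcd(n, d) + \lcm(n, d) - 1$ under the hypothesis $m \mid d$, which precisely controls the $q$-adic valuations of the relevant invariant factors, and then invoke Proposition \ref{onsd_prop_bounds}. For parts 1 and 2, a Kemnitz-style construction in the rank-$\leq 2$ group $G$ suffices: writing $G_q \cong C_{q^a} \oplus C_{q^b}$ and letting $e_1, e_2$ generate the two invariant-factor cyclic summands of $G \cong C_{q^a} \oplus C_{q^b n'}$, the sequence $e_1^{q^a - 1} \cdot e_2^{n - 1} \cdot (e_1 + e_2)^{q^a - 1}$ has no nonempty zero-sum subsequence of length $\leq n$, giving the lower bound for $\eta(G)$; appending $0^{n - 1}$ also rules out zero-sums of length exactly $n$, giving the lower bound for $\mathsf{s}(G)$.

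For the upper bounds in parts 1 and 2, the plan is to run the inductive extraction method. Given a long sequence $S \in \mathcal{F}(G)$, iteratively extract nonempty subsequences $T_i \mid S$ whose $G_q$-projections are zero-sum in $G_q$ (using $\eta(G_q)$ as the extraction threshold for part 1 and $\mathsf{s}(G_q)$ for part 2), recording $c_i = \sigma(\pi_{n'}(T_i)) \in C_{n'}$. A direct count shows that $n'$ such extractions fit inside a sequence of length $2(\mathsf{D}(G_q) - n_q) + n$, while $2n' - 1$ extractions fit inside one of length $2(\mathsf{D}(G_q) - n_q) + 2n - 1$. Then $\mathsf{D}(C_{n'}) = n'$, respectively $\mathsf{s}(C_{n'}) = 2n' - 1$, produces a zero-sum subset of the $c_i$'s, and the corresponding concatenation of $T_i$'s is a zero-sum subsequence of $G$ of the prescribed length---at most $n$ for part 1 and equal to $n$ for part 2.

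The main obstacle is the upper bound for part 3. The direct approach combines Proposition \ref{onsd_prop_bounds} ($\mathsf{s}_{d \mathbb{N}}(G) \leq \mathsf{D}(G \oplus C_d)$) with the equality $\mathsf{D}(G \oplus C_d) = \mathsf{D}^{\ast}(G \oplus C_d)$, obtained by splitting off from $G \oplus C_d$ a coprime-to-$q$ cyclic cofactor and invoking the third bullet in the list of known Davenport-constant equalities recalled before the proof of Theorem \ref{onsd_thm}. The delicate case is $\mathsf{v}_q(d) = \mathsf{v}_q(n_q)$ with $G_q$ of rank exactly $2$: here that bullet's hypothesis $\mathsf{D}^{\ast} \leq 2\exp - 1$ applied to the $q$-part $G_q \oplus C_{q^{\mathsf{v}_q(d)}}$ of $G \oplus C_d$ fails, and one must argue directly. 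The natural route is to adapt the inductive extraction of parts 1 and 2 by tailoring the zero-sums extracted in $G_q$ to carry the correct length-divisibility by the $q$-part of $d$; the hypothesis $m \mid d$ is exactly what is needed to close the one-unit gap that a naive count otherwise leaves between the available extractions and the target value $\mathsf{s}_{d'\mathbb{N}}(C_{n'}) = \gcd(n', d') + \lcm(n', d') - 1$, where $d' = d / q^{\mathsf{v}_q(d)}$.
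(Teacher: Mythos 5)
There is a genuine gap, and it originates in the very first structural claim: the hypotheses do \emph{not} force $G_q$ to have rank at most $2$. For example, $G_q=C_3^4\oplus C_{3^a}$ with $a\ge 2$ has $\mathsf{D}(G_q)=\mathsf{D}^*(G_q)=3^a+8$, so $m=\mathsf{D}(G_q)-\exp(G_q)+1=9$ divides $\exp(G_q)=3^a$, yet the rank is $5$. (Your congruence intuition fails because $m\equiv 2-\mathsf r(G_q)\pmod q$ can vanish for large rank.) Everything downstream leans on this false reduction: the ``classical'' exact values $\eta(G_q)=2\mathsf{D}(G_q)-\exp(G_q)$ and $\mathsf{s}(G_q)=2\mathsf{D}(G_q)-1$ are only known for rank $\le 2$; for general $G_q$ satisfying the divisibility hypothesis they are precisely the case $G=G_q$ of the theorem you are proving, so invoking them makes the argument circular. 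If you replace them by what is actually available, namely the bounds $\eta(G_q)\le \mathsf{D}(G_q)+\exp(G_q)-1$ and $\mathsf{s}(G_q)\le \mathsf{D}(G_q)+2\exp(G_q)-2$ from Schmid--Zhuang, your extraction count for parts 1 and 2 comes up short by exactly $\exp(G_q)-m$ whenever $\mathsf{D}(G_q)<2\exp(G_q)-1$. The paper's way around this is to split \emph{inside} the $q$-part: it writes $G\cong H\oplus C_n$ with $H$ a $q$-group, $\mathsf{D}(H)=m\mid n$, chooses $K\cong C_{n/m}$ with $G/K\cong H\oplus C_m$, and notes $\mathsf{D}(H\oplus C_m)=2m-1=2\exp(H\oplus C_m)-1$, so Theorem \ref{ind_thm_bound} applies to the quotient with no slack; the hypothesis $m\mid\exp(G_q)$ is used for this internal splitting, not merely to separate $G_q$ from the cyclic complement. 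Your explicit rank-$2$ lower-bound sequence likewise does not exist in general; the paper instead cites \cite[Lemma 3.2]{E-E-G-K-R07}.

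Part 3 has a second gap. The route $\mathsf{s}_{d\mathbb{N}}(G)\le\mathsf{D}(G\oplus C_d)=\mathsf{D}^*(G\oplus C_d)$ via the third bullet before Theorem \ref{onsd_thm} requires $G\oplus C_d\cong G'\oplus C_N$ with $G'$ a $q$-group and $q\nmid N$ \emph{cyclic}; the non-$q$-part of $G\oplus C_d$ is $C_{n'}\oplus C_{d'}$ (with $d'=d/q^{\mathsf v_q(d)}$), which is non-cyclic whenever $\gcd(n',d')>1$, and the $q$-part condition $\mathsf{D}^*\le 2\exp-1$ can also fail well beyond the single case you flag (e.g.\ $G=C_9\oplus C_{45}$, $d=45$ fails on both counts). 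So the ``direct approach'' covers only a thin slice of the cases, and the fallback is a one-sentence sketch: you do not specify which group plays the role of $K\oplus\langle me\rangle$, nor how the right number of blocks is produced. The paper's argument here is concrete: extract $j=\gcd(n,d)/m+\lcm(n,d)/m-2$ blocks of length $m$ with zero-sum image in $G/K\cong H\oplus C_m$ using $\mathsf{s}(H\oplus C_m)\le 4m-3$, one further block of length divisible by $m$ using $\mathsf{s}_{m\mathbb{N}}(H\oplus C_m)\le 3m-2$, and then apply $\mathsf{D}\bigl(C_{\gcd(n,d)/m}\oplus C_{\lcm(n,d)/m}\bigr)=j+1$ to the shifted sums in $K\oplus\langle me\rangle$; the hypothesis $m\mid d$ is what makes $K\oplus\langle me\rangle\cong C_{n/m}\oplus C_{d/m}$. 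Without an argument at this level of detail, your upper bound for part 3 is not established. The only portions of your proposal that stand as written are the lower bounds obtained from Lemma \ref{D*lemma}, Proposition \ref{onsd_prop_bounds}, and (for parts 1 and 2, once corrected) the cited construction.
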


\medskip

For both of the proofs below, we use \cite[Proposition 5.7.11]{Ge-HK06a}, which states that if $K\leq G$ and $\exp(G)=\exp(K)\exp(G/K)$, then \ber\label{inductive-exp-bounds-eta} \eta(G)&\leq & \exp(G/K)(\eta(K)-1)+\eta(G/K)\und\\
\label{inductive-exp-bounds-s}\mathsf s(G)&\leq &
\exp(G/K)(\mathsf s(K)-1)+\mathsf s(G/K).\eer
\medskip

\begin{proof}[Proof of Theorem \ref{ind_thm_bound}]
Let $p_1, \dots, p_s$ be the distinct primes such that $G= G_{p_1} \oplus \dots \oplus G_{p_s}$ is the decomposition of $G$ into non-trivial $p$-groups.

First, we establish the claims on $\eta(G)$ and $\mathsf{s}(G)$. Thus, we (temporarily) assume that each $p_i$ is odd.
We induct on $s$. For $s=0$, the claim is
trivial, and for $s=1$, it is an immediate consequence of
\cite[Theorem 1.2]{Sc-Zh09a}, which asserts that, for $H$ a $p$-group with $p$ an odd prime and $\mathsf{D}(H)\le 2\exp(H)-1$, one has $\mathsf{\eta}(H)\le  \mathsf{D}(H) +  \exp(H)-1$ and $\mathsf{s}(H)\le \mathsf{D}(H) + 2 \exp(H)-2$.

Suppose $s \ge 2$ and the claims hold true for $s-1$.  Since
$\exp(G)=\exp(G/G_{p_s})\exp(G_{p_s})$, we can invoke \eqref{inductive-exp-bounds-eta} and \eqref{inductive-exp-bounds-s} to conclude
\ber\label{teefel}\eta(G)&\le& \exp(G/G_{p_s})(\eta(G_{p_s})-1) + \eta(G/G_{p_s})\und\\
\label{eengel}\mathsf{s}(G)&\le& \exp(G/G_{p_s})(\mathsf{s}(G_{p_s})-1) + \mathsf{s}(G/G_{p_s}).\eer
By induction hypothesis, we have
\ber\nn\eta(G/G_{p_s})\leq 3\exp(G)/\exp(G_{p_s})-2,\quad \eta(G_{p_s})\leq 3\exp(G_{p_s})-2,\\\nn \mathsf{s}(G/G_{p_s})\leq 4\exp(G)/\exp(G_{p_s})-3,\und \quad \mathsf{s}(G_{p_s})\leq 4\exp(G_{p_s})-3.\eer Combining these inequalities with \eqref{teefel} and \eqref{eengel}
yields the desired bounds.

Next, we prove the result on $\mathsf s_{n \N} (G)$ and $\mathsf{D}(G)$.  However, the upper bound on $\mathsf{D}(G)$ follows from Proposition \ref{onsd_prop_bounds} and part 1(b), so it suffices to show 1(b). To do so, we  drop the assumption that each $p_i$ is odd. Of course, at most one of the $p_i$'s is even, and thus we may assume that $p_1, \dots , p_{s-1}$ are odd.
Again, we induct on $s$. The case $s=0$ is trivial. If  $s=1$, then $G=G_{p_1}$ is a $p_1$-group, so that $\mathsf D(G)=\mathsf D^*(G)$ by the previously mentioned results on the Davenport constant, in which case  Proposition \ref{onsd_prop_bounds} and our hypotheses, keeping in mind that $n=\exp(G)=\exp(G_{p_1})$, imply $$\mathsf s_{n\N}(G)=\mathsf D^*(G\oplus C_n)=\mathsf D^*(G_{p_1}\oplus C_n)=\mathsf D^*(G_{p_1})+n-1\leq 2\exp(G_{p_1})-1+n-1=3n-2,$$ as desired. This completes the base of the induction.

Suppose $s \ge 2$ and the claim holds true for $s-1$.
Let $\varphi \colon G \to G/G_{p_s} \cong G_{p_1} \oplus \dots
\oplus G_{p_{s-1}}$ denote the canonical epimorphism. Let $S \in
\mathcal{F}(G)$ with $|S| \ge 3n - 2$. Let $m= \exp(G_{p_s})$. Since
$$|S|\geq 3n-2= (3m-4)n/m + 4n/m -2$$ and since
$\mathsf{s}(G/G_{p_s})\le 4n/m -3$ holds by Theorem \ref{ind_thm_bound}.2(b), it follows that $S$ admits a product
decomposition $S = S_1 \cdot \ldots \cdot S_{3m-3}S'$ such that each $\varphi (S_i)$ has sum zero and length $|S_i| = n/m$,
where
$S_1,
\ldots, S_{3m-3}, S' \in \mathcal F (G)$ (see
\cite[Lemma 5.7.10]{Ge-HK06a}).

In view of  $|S'| \geq 3n-2-(3m-3)\frac{n}{m}=3\frac{n}{m}-2$ and the induction hypothesis, $S'$ has a
subsequence $S_{3m-2}$ such that $n/m \mid |S_{3m-2}|$ and
$\sigma(\varphi(S_{3m-2}))=0$. Now, for some generating element $e
\in C_n$, let $\iota \colon G \to G\oplus C_n$ denote the map
defined via $\iota(g)= g +e$. Then $\sigma(\iota(T))=
\sigma(T)+|T|e $ for each $T \in \mathcal{F}(G)$; in particular,
$\sigma(\iota(S_i)) \in G_{p_s} \oplus \langle (n/m) e \rangle$ for
each $i \in [1,3m-2]$. Since $$\mathsf{D}(G_{p_s} \oplus \langle
(n/m) e \rangle) = \mathsf{D}(G_{p_s})+ m -1 \le 3m-2,$$ it follows
that the sequence $\prod_{i=1}^{3m-2} \sigma(\iota(S_i))$ has a
nonempty zero-sum subsequence; let $\emptyset \neq I \subset
[1,3m-2]$ be such that $\sum_{i \in I} \sigma(\iota(S_i))=0$. Thus
$\sigma(\iota(\prod_{i \in I}S_i))= \sigma(\prod_{i \in I}S_i) +
|\prod_{i \in I}S_i|e = 0$, whence $\prod_{i\in I}S_i$ is a nonempty
zero-sum subsequence of $S$ of length divisible by $\ord(e)=n$.
\end{proof}

Parts of the proof of Theorem \ref{ind_thm_equ} are similar to the proof of Theorem \ref{ind_thm_bound}.

\medskip
\begin{proof}[Proof of Theorem \ref{ind_thm_equ}] Let $m=\mathsf D(G_q)-\exp(G_q)+1$.
Our assumptions on $G$ imply that there exists some $q\in \P$ and
$q$-group $H$ such that $G \cong H \oplus C_n$ with $\exp(H)\mid n$.
Moreover, we know that $$\mathsf{D}(H)=m$$ divides $\exp(G_q)$, and thus $n$ as well; let $n = mk$.
Let $K\cong C_k$ be a subgroup of $G$ such that $G/K \cong H \oplus
C_m$. Let $\varphi \colon G \to G/K$ denote the canonical
epimorphism. Since $m$ divides $\exp(G_q)$, which is a power of the prime $q$, it follows that $m$ is itself a power of $q$. Consequently, since $\exp(H)\leq \mathsf D^*(H)\leq \mathsf D(H)=m$ with $\exp(H)$ also a power of the prime $q$, it follows that \be\label{weesel}\exp(H)\mid m.\ee Since $H$ and $G_q$ are both $q$-groups, so that $\mathsf D(H)=\mathsf D^*(H)$ and $\mathsf D(G_q)=\mathsf D^*(G_q)$ (as remarked earlier in the paper), it follows that \be\label{twister}\mathsf D(G_q)-\exp(G_q)=\mathsf D (H)-1.\ee

We start by establishing the result on $\eta(G)$ and
$\mathsf{s}(G)$. On the one hand, by \cite[Lemma 3.2]{E-E-G-K-R07} and \eqref{twister},
we know \ber \nn\eta(G) \ge 2 ( \mathsf{D}(H) - 1 ) + n=2(\mathsf D(G_q)-\exp(G_q))+n\und\\\nn
\mathsf{s}(G) \ge 2 ( \mathsf{D}(H) - 1 ) + 2 n - 1=2(\mathsf D(G_q)-\exp(G_q))+2n-1.\eer For the upper bound, first observe that
\eqref{weesel} implies that $\exp(H\oplus C_m)=m$. In consequence, we have $\exp(H\oplus C_m)\exp(C_k)=mk=n=\exp(G)$. Thus \eqref{inductive-exp-bounds-eta} and \eqref{inductive-exp-bounds-s} imply that
\be\label{waxing}\eta(G) \le m (\eta(K) - 1) + \eta(H \oplus C_m)\und \mathsf{s}(G) \le m (\mathsf{s}(K) - 1) + \mathsf{s}( H \oplus
C_m ).\ee
Since $K\cong C_k$ is cyclic, we know (see \cite[Theorem 5.8.3]{Ge-HK06a}) \be\label{tree}\eta(K)=k\und \mathsf s(K)=2k-1.\ee Noting that $H\oplus C_m$ is a $q$-group with $q$ an odd prime so that \eqref{weesel} implies $$\mathsf D(H\oplus C_m)=\mathsf D(H)+m-1=2m-1=2\exp(H\oplus C_m)-1,$$ we see that we can apply Theorem \ref{ind_thm_bound} to conclude \be\label{eert} \eta(H \oplus C_m)\leq 3m-2\und \mathsf{s}(H \oplus C_m) \leq 4m-3.\ee Combing \eqref{waxing}, \eqref{tree} and \eqref{eert} yields $$\eta(G)\leq 2m-2+mk=2(\mathsf D(H)-1)+n\und \mathsf s(G)\leq 2m-2+2mk-1=2(\mathsf D(H)-1)+2n-1,$$ as desired.

It remains to determine $\zsm{d}{G}$. We continue to use the notation already introduced. By hypothesis, we have $m|d$; as shown above, we also have $G\cong H\oplus C_n$ with $\exp(H)|m$ and $m|n$. Thus it follows, in view of \eqref{twister} and $\mathsf D(H)=\mathsf D^*(H)$ (as $H$ is a $q$-group), that
\ber\nn\mathsf{D}^{\ast}(G\oplus C_d)&=& \mathsf{D}^{\ast}(H) + \gcd(n,d) + \lcm(n,d) - 2\\
&=&\mathsf{D}(G_q)-\exp(G_q) + \gcd(n,d) + \lcm(n,d) - 1.\nn\eer
By Proposition \ref{onsd_prop_bounds}, we know the above  quantity is a lower bound for
$\zsm{d}{G}$. It remains to show it is also an upper bound as well.

Let $S
\in \mathcal{F}(G)$ be of the above length $\mathsf D^*(G\oplus C_d)=\mathsf{D}^{\ast}(H) + \gcd(n,d) + \lcm(n,d) - 2$. As used in the proof for the bounds $\eta(G)$ and $\mathsf s(G)$, we know that $\exp(H\oplus C_m)=m$ and
\be\label{dex}\mathsf{s}(H\oplus C_m) \leq  4m - 3\ee by Theorem \ref{ind_thm_bound}. We set $j=\gcd(n,d)/m +
\lcm(n,d)/m - 2$. Then, recalling that $\mathsf D^*(H)=\mathsf D(H)=m$, we find that  \be\label{oaktreewalz}|S|=\mathsf{D}(H) +
\gcd(n,d) + \lcm(n,d)-2 = m ( j-1 ) + 4m -2.\ee As a result, repeating applying, in view of \eqref{dex}, the definition of $\mathsf{s}(H\oplus C_m)$ to $\varphi(S)$ and recalling that $\exp(H\oplus C_m)=m$ in view of \eqref{weesel},  it follows that $S$ admits a
product decomposition $S = S_1 \cdot \ldots \cdot S_j S'$ such that each $\varphi (S_i)$ has sum zero and length  $|S_i|=m$, where
$S_1, \ldots, S_j, S' \in \mathcal F (G)$  (see
\cite[Lemma 5.7.10]{Ge-HK06a}). Since \eqref{oaktreewalz} implies $$|S'| =|S|-jm= m ( j-1 ) + 4m -2-jm=3m -2$$ and since $\zsm{m}{H
\oplus C_m} \leq  3m - 2$ by Theorem \ref{ind_thm_bound}, which we can invoke as explained before \eqref{eert},  it follows that $S'$ has a
subsequence $S_{j+1}$ with $m \mid |S_{j+1}|$ and
$\sigma(S_{j+1})\in K$.

We consider $\iota \colon G \to G \oplus
C_d$ defined via $\iota(g)=g+e$ for some generating element $e$ of
$C_d$. We observe that $\sigma(\iota(S_i))\in K \oplus \langle m e
\rangle$ for each $i \in [1,j+1]$. Since $m \mid d$ and $n=mk$, it follows that
$$K \oplus \langle me_i\rangle \cong C_{n/m} \oplus C_{d/m} \cong
C_{\gcd(n,d)/m}\oplus C_{\lcm(n,d)/m}.$$ This is a rank $2$ group, so the Davenport constant
of this group is $j+1$ cf.~the results mentioned before the proof
of Theorem \ref{onsd_thm}. Hence the sequence
$\prod_{i=1}^{j+1}\sigma(\iota(S_i))\in \mathcal F(K \oplus \langle me_i\rangle)$ has a nonempty zero-sum
subsequence. Let $\emptyset \neq I \subset [1,j+1]$ denote
index-set corresponding to this sequence. It follows that
$\prod_{i\in I}\iota(S_i)\in \mathcal F(G\oplus C_d)$ is a zero-sum sequence,   whence
$\prod_{i\in I}S_i\in\mathcal F(G)$ is a zero-sum subsequence of $S$ with length divisible by
$d$ (by the same arguments used at the end of the proof of Theorem \ref{ind_thm_bound}).
\end{proof}

We end this section by discussing the relevance of the assumptions in our results.
\begin{remark} \
\label{ind_rem_imp}
\begin{enumerate}
\item It is conceivable that the assumption  $\mathsf{D}(G_q) - \exp(G_q)+1 \mid \exp(G_q)$ in Theorem \ref{ind_thm_equ}
 can actually be replaced by the assumption $\mathsf{D}(G_q) - \exp(G_q)+1 \le \exp(G_q)$ of Theorem \ref{ind_thm_bound}.
We could relax the assumption in this way if \cite[Conjecture 4.1]{Sc-Zh09a} were true; this conjecture concerns the exact value
of $\eta(G_q)$ and $\mathsf{s}(G_q)$ under this weaker assumption.
\item The restriction that $\exp(G)$ and $q$ are odd, which is imposed in the second part of our result, is due to the fact that \cite[Theorem 1.2]{Sc-Zh09a} is only applicable in this case, yet it is well possible that the statement holds for $2$-groups as well, in which case these assumptions could be dropped (cf.~again \cite[Conjecture 4.1]{Sc-Zh09a}).
\item The restriction in Theorem \ref{ind_thm_equ} that $G/G_q$  is cyclic is very likely not technical.
It seems quite unlikely that there is a uniform argument of this form to determine the precise value of the constants under the assumptions of Theorem \ref{ind_thm_bound}. For example, note that in this more general setting, $\mathsf{D}^{\ast}(G)$ depends on the \emph{precise} structure of each of the $p$-subgroups of $G$ (also see the results in \cite{E-E-G-K-R07}).
Yet, imposing the assumption that $G$ is a group of rank $2$, and thus each $p$-subgroup has at most rank $2$, the values of $\mathsf{D}(G)$, $\eta(G)$, and $\mathsf{s}(G)$ are known, and we additionally determine $\zsm{d}{G}$ for
any $d$ (see Section \ref{sec-rank2}).
\end{enumerate}
\end{remark}

\bigskip
\section{On $\zsm{d}{G}$ for groups of rank two}\label{sec-rank2}
\bigskip

In this section, we determine $\zsm{d}{G}$ for rank $2$ groups $G$. For the proof, we make use of the fact that \be\label{kemnitz}\mathsf s(C_m\oplus C_n)=2n+2m-3\ee when $1\leq m|n$ (see \cite[Theorem 5.8.3]{Ge-HK06a}), which is essentially a consequence of the Kemnitz Conjecture, verified by Reiher \cite{Re07a}. We also need the fact that \be\label{cyclops}\mathsf D(C_m\oplus C_n)=m+n-1\ee when $1\leq m\mid n$ (see \cite[Theorem 5.8.3]{Ge-HK06a}).

\bigskip
\begin{lemma}\label{useful-lemma} Let  $G\cong C_m\oplus C_n$ with $1\leq m\mid n$, and let $t\in \N$.  If $S\in \mathcal F(G)$ is a sequence with $$|S|\geq (t-1)n+\zsm{n}{G},$$ then $S$ has a decomposition $S=S_1\cdot\ldots\cdot S_tS'$ with each $S_i$ zero-sum, $|S_i|=n$ for $i\in [1,t-1]$, and $|S_t|\in \{n,2n\}$, where $S_1,\ldots,S_n,S'\in \mathcal F(G)$.
\end{lemma}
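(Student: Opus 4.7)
My plan is to argue by induction on $t$, using the Erd\H{o}s--Ginzburg--Ziv constant $\mathsf s(G) = 2m + 2n - 3$ in the inductive step and a short trimming argument in the base case.

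For the inductive step ($t \geq 2$), I would first invoke Proposition \ref{onsd_prop_bounds} and Lemma \ref{D*lemma} to obtain the lower bound $\zsm{n}{G} \geq \mathsf D^*(G \oplus C_n) = m + 2n - 2$, and combine it with the hypothesis $|S| \geq (t-1)n + \zsm{n}{G}$ to conclude
\[
|S| \;\geq\; (t-1)n + m + 2n - 2 \;\geq\; m + 3n - 2 \;\geq\; 2m + 2n - 3 \;=\; \mathsf s(G),
\]
the last inequality being a consequence of $m \leq n$ (which in turn follows from $m \mid n$). The definition of $\mathsf s(G)$ then produces a zero-sum subsequence $S_1 \mid S$ with $|S_1| = n$. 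Letting $\widetilde S \mid S$ denote the remaining part of $S$ after removing $S_1$, one has $|\widetilde S| \geq (t-2)n + \zsm{n}{G}$, and the inductive hypothesis applied to $\widetilde S$ with parameter $t-1$ supplies $S_2, \ldots, S_t, S'$ completing the required decomposition.

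For the base case ($t = 1$), the hypothesis is $|S| \geq \zsm{n}{G}$, and the definition of $\zsm{n}{G}$ furnishes some nonempty $T \mid S$ which is a zero-sum with $n \mid |T|$. I would pick such a $T$ of minimal length and set $S_1 := T$; it remains to check $|T| \leq 2n$. Suppose for contradiction that $|T| \geq 3n$. Since $G$ has rank at most two, Theorem \ref{ind_thm_bound} yields $\zsm{n}{G} \leq 3n - 2$. Choose any $g \in G$ with $\mathsf v_g(T) \geq 1$, and let $T^* \mid T$ be obtained by removing one copy of $g$ from $T$. Then $|T^*| = |T| - 1 \geq 3n - 1 \geq \zsm{n}{G}$, so applying the definition of $\zsm{n}{G}$ to $T^*$ produces a nonempty zero-sum $T' \mid T^*$ with $n \mid |T'|$. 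But $T' \mid T \mid S$ and $|T'| \leq |T| - 1 < |T|$, contradicting the minimality of $|T|$.

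The only step carrying any real content is the trimming in the base case: reapplying $\zsm{n}{G}$ to the slightly truncated $T^*$ is possible precisely because $|T| \geq 3n$ and $\zsm{n}{G} \leq 3n - 2$ (the bound from Theorem \ref{ind_thm_bound}), and this yields a strictly shorter zero-sum of length divisible by $n$, contradicting minimality. The remainder is routine bookkeeping.
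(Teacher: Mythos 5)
Your proof is correct and follows essentially the same route as the paper: repeatedly extracting length-$n$ zero-sum subsequences via $\mathsf s(G)=2n+2m-3$ (your induction on $t$ is just a rephrasing of the paper's repeated application) and then obtaining the final block of length $n$ or $2n$ from $\zsm{n}{G}\leq 3n-2$. Your explicit minimality/trimming argument in the base case simply spells out a step the paper treats as immediate (one can equally truncate to a subsequence of length exactly $\zsm{n}{G}\leq 3n-2$ before applying the definition), so there is no substantive difference.
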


\begin{proof}
In view of Theorem \ref{ind_thm_bound}.1(b), we know that $|S''|\geq \zsm{n}{G}$ implies that $S''\in \mathcal F(G)$
contains a zero-sum sequence of length $n$ or $2n$.
From Proposition \ref{onsd_prop_bounds} and Lemma \ref{D*lemma}, we know \be\label{willful}\zsm{n}{G}\geq \mathsf D^*(G\oplus C_n)=n+\mathsf D^*(G)-1=2n+m-2.\ee From \eqref{kemnitz}, we know \be\label{skillful} \mathsf s(G)\leq 2n+2m-3.\ee

In view of \eqref{willful} and \eqref{skillful}, we have $n+\zsm{n}{G}\geq 3n+m-2\geq \mathsf s(G)$. Thus, in view of $|S|\geq (t-1)n+ \zsm{n}{G}$ , we can repeatedly apply the definition of $\mathsf s(G)$ to $S$ to find $t-1$ zero-sum subsequences $S_1,\ldots,S_{t-1}$  with $S_1\cdot\ldots\cdot S_{t-1}\mid S$ and $|S_i|=n$ for all $i$. Let $S''=S(S_1\cdot\ldots\cdot S_{t-1})^{-1}$. Then $|S''|=|S|-(t-1)n\geq  \zsm{n}{G}$. Hence, as remarked at the beginning of the proof, $S''$ must have a zero-sum subsequence $S_t$ with $|S_t|=n$ or $|S_t|=2n$, completing the proof.
\end{proof}

\bigskip

\begin{theorem}\label{thm-rank2} Let $d\in \mathbb N$ and let $G\cong C_m\oplus C_n$ with $1\leq m\mid n$. Then $$\zsm{d}{G}=\mathsf D^*(G\oplus C_d)=\lcm(n,d)+\gcd(n,\lcm(m,d))+\gcd(m,d)-2.$$
\end{theorem}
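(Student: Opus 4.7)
The explicit formula $\mathsf D^*(G\oplus C_d)=\lcm(n,d)+\gcd(n,\lcm(m,d))+\gcd(m,d)-2$ is supplied by Lemma \ref{D*lemma}, and the lower bound $\zsm{d}{G}\geq \mathsf D^*(G\oplus C_d)$ by Proposition \ref{onsd_prop_bounds}. It therefore suffices to establish the reverse inequality. Abbreviate $a=\gcd(m,d)$, $b=\gcd(n,\lcm(m,d))$, and $c=\lcm(n,d)$, so that $a\mid b\mid c$ and $G\oplus C_d\cong C_a\oplus C_b\oplus C_c$; I must show that every $S\in\mathcal F(G)$ of length $|S|\geq a+b+c-2$ has a non-empty zero-sum subsequence of length divisible by $d$.

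The case $a=1$ is immediate: then $G\oplus C_d$ has rank at most $2$, so $\mathsf D(G\oplus C_d)=\mathsf D^*(G\oplus C_d)$ by the rank-$2$ Davenport identity recalled before the proof of Theorem \ref{onsd_thm}, and Proposition \ref{onsd_prop_bounds} gives the conclusion directly. The substantive case is $a\geq 2$, in which $G\oplus C_d$ has genuine rank $3$ and $\mathsf D(G\oplus C_d)$ may a priori be larger than $\mathsf D^*(G\oplus C_d)$; the argument must proceed without invoking it.

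For this main case my plan follows the template of Theorem \ref{ind_thm_equ}: a two-stage extraction ending in a rank-$2$ Davenport step. Fix a generator $e$ of $C_d$ and $\iota\colon G\to G\oplus C_d$, $\iota(g)=g+e$. The plan is to choose a \emph{cyclic} subgroup $K\leq G$, apply Lemma \ref{useful-lemma} to the projection $\bar S\in\mathcal F(G/K)$ with parameter $t$, and obtain a decomposition $S=S_1\cdots S_tS'$ with $\sigma(S_i)\in K$ and $|S_i|\in\{\exp(G/K),2\exp(G/K)\}$. The residues $\sigma(\iota(S_i))=\sigma(S_i)+|S_i|e$ then lie in the rank-$\leq 2$ subgroup $K\oplus\langle\exp(G/K)e\rangle\leq G\oplus C_d$ (the rank bound uses that $K$ is cyclic). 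Setting $t$ equal to the Davenport constant of this subgroup---which is available in closed form via \eqref{cyclops} since the subgroup has rank at most $2$---one obtains a non-empty subset $I\subset[1,t]$ with $\sum_{i\in I}\sigma(\iota(S_i))=0$, whence $\prod_{i\in I}S_i$ is a zero-sum subsequence of $S$ of length divisible by $d$.

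The hard part will be choosing $K$ so that the total cost $(t-1)\exp(G/K)+\zsm{\exp(G/K)}{G/K}$, with $t=\mathsf D(K\oplus\langle\exp(G/K)e\rangle)$, adds up to exactly $a+b+c-2$. The naive choice $K=0$ only brings into play the cyclic Davenport of $\langle ne\rangle\cong C_{c/n}$ and generally overshoots by $(m-a)+(n-b)$; on the other hand no single choice of $K$ works universally, as sample calculations for $(G,d)=(C_2\oplus C_4,6)$ (where the nontrivial $K=\langle 2e_2\rangle$ is needed) versus $(G,d)=(C_3\oplus C_3,6)$ (where $K=0$ already suffices) show. Accordingly, a case analysis based on the relative $p$-adic valuations of $m$, $n$, and $d$ is required in order to exhibit an admissible $K$ for every pair $(G,d)$; and since the cost bound involves $\zsm{\exp(G/K)}{G/K}$---that is, the theorem itself applied to the smaller group $G/K$---the entire argument has to be organised as an induction on $|G|$ in order to avoid circularity. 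Orchestrating this induction and the case analysis so that the arithmetic balance $(t-1)\exp(G/K)+\zsm{\exp(G/K)}{G/K}=a+b+c-2$ holds in every case is where I expect the bulk of the technical work to lie.
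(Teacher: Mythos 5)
You have the right skeleton for the easy parts: the explicit formula via Lemma \ref{D*lemma}, the lower bound via Proposition \ref{onsd_prop_bounds}, and the observation that $\gcd(m,d)=1$ makes $G\oplus C_d$ of rank at most two so that $\mathsf D=\mathsf D^*$ finishes that case. But the heart of the theorem is the upper bound when $\gcd(m,d)\ge 2$, and there you give only a template and explicitly defer the decisive content ("the bulk of the technical work"): no choice of $K$ is exhibited, no case analysis is carried out, and the induction is not set up. As it stands this is a plan, not a proof.

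Moreover, the template itself is too narrow to ever be completed as stated. Requiring $K\le G$ cyclic, blocks of length in $\{\exp(G/K),2\exp(G/K)\}$ from Lemma \ref{useful-lemma}, and a final Davenport step in $K\oplus\langle \exp(G/K)e\rangle$ cannot reach the target in general: for $G=C_4\oplus C_4$ and $d=2$ the target is $\mathsf D^*(G\oplus C_2)=8$, while your cost $(t-1)\exp(G/K)+\zsm{\exp(G/K)}{G/K}$ with $t=\mathsf D(K\oplus\langle \exp(G/K)e\rangle)$ equals $10$ for $K=0$, $12$ for $K$ of order $2$, and $19$ for $K$ cyclic of order $4$; no cyclic $K$ attains $8$. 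The paper's proof gets around this with two devices absent from your plan. For $d\mid n$ it takes the \emph{rank-two} subgroup $K\cong C_u\oplus C_{uv}$ (with $u=m/\gcd(m,d)$, $v=n/\lcm(m,d)$) so that $G/K\cong C_{\gcd(m,d)}\oplus C_d$; then the blocks produced by Lemma \ref{useful-lemma} already have length divisible by $d$, so the final Davenport step takes place inside $K$ itself --- no $\iota$-map and no extra cyclic factor is needed, and \eqref{cyclops} still applies because $K$ has rank two. For $d\nmid n$ no quotient construction is used at all; instead one proves the reduction \eqref{super-bound}, $\zsm{d}{G}\le \lcm(n,d)-n+\zsm{\gcd(n,d)}{G}$, by extracting $\alpha-2$ zero-sum blocks of length $n$, one of length $n$ or $2n$, and one of length $k_0\gcd(n,d)$, and then gluing a suitable prefix of the length-$n$ blocks to the last one using that their partial lengths, divided by $\gcd(n,d)$, run over all nonzero residues modulo $\alpha=\lcm(n,d)/n$ (because $\gcd(\alpha, n/\gcd(n,d))=1$). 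Relaxing $K$ to rank two with $d$-divisible block lengths, and this residue-combination trick for $d\nmid n$, are exactly the missing ideas; without them the balance you hope to achieve by "orchestrating the induction" is provably unattainable within your framework.
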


\begin{proof} When $m=1$, this follows from Theorem \ref{onsd_thm}.1. Therefore we assume $m>1$.
Since $G$ has rank two, it follows that each $p$-component $G_p$ has rank at most two, and thus $\mathsf D(G_p)=\mathsf D^*(G_p)\leq 2\exp(G_p)-1$ for all primes dividing $n$.
Note that Lemma \ref{D*lemma} implies that \be\label{target-value}\mathsf D^*(G\oplus C_d)=\lcm(n,d)+\gcd(n,\lcm(m,d))+\gcd(m,d)-2,\ee while Proposition \ref{onsd_prop_bounds} shows that this is a lower bound for $\zsm{d}{G}$. It remains to show it is also an upper bound. We begin by considering two particular cases.

\subsection*{Case 1: } $d=n$. If $m=n$, then \eqref{target-value} becomes $\mathsf D^*(G\oplus C_d)=3n-2$, and the result follows from Theorem \ref{ind_thm_bound}.1(b). Therefore we assume $m<n$. We proceed by a minor modification of the argument used for
 Theorem \ref{ind_thm_bound}.1(b).
Since $m<n$, let $n=km$ with $k\geq 2$. Let $K\leq G$ be a subgroup such that $$K\cong C_k\und G/K\cong C_m\oplus C_m$$ and let $\varphi:G\rightarrow G/K\cong C_m\oplus C_{m}$ denote the natural homomorphism. Note, under the assumption $d=n$, that \eqref{target-value} becomes $$\mathsf D^*(G\oplus C_d)=2n+m-2.$$ Let $S\in \mathcal F(G)$ with $|S|=2n+m-2$. By the previously handled case ($d=m=n$), it follows that $\zsm{m}{G/K}=\mathsf D^*(G/K\oplus C_{m})=3m-2$.
Thus $$|\varphi(S)|=|S|=(2k-2)m+3m-2=(2k-2)m+\zsm{m}{G/K}.$$ Applying Lemma \ref{useful-lemma} to $\varphi(S)$, we find a product decomposition $S=S_1\cdot\ldots\cdot S_{2k-1}S'$ with each $S_i$ being zero-sum modulo $K$ and of length $|S_i|\in \{m,2m\}$. Let $\iota:G\rightarrow G\oplus \langle e\rangle\cong G\oplus C_n$, where $\ord(e)=n$, be the map defined by letting $\iota(g)=g+e$. Then, since each $S_i$ is zero-sum modulo $K$ with length a multiple of $m$, it follows that  $\sigma(\iota(S_i))\in K\oplus \langle me\rangle\cong C_k\oplus C_k$ for each $i\in [1,2k-1]$. Since $\mathsf D(C_k\oplus C_k)=2k-1$ by \eqref{cyclops}, applying the definition of $\mathsf D(C_k\oplus C_k)$ to the sequence $\prod_{i=1}^{2k-1}\sigma(\iota(S_i))\in \mathcal F(K\oplus \langle me\rangle)$ yields a nonempty zero-sum sequence, say indexed by $I\subseteq [1,2k-1]$. Thus $0=\sigma(\prod_{i\in I} \iota(S_i))=\sigma(\prod_{i\in I}S_i)+|\prod_{i\in I}S_i|e$, whence $\prod_{i\in I}S_i\in \mathcal F(G)$ is a nonempty zero-sum subsequence of $S$ whose length is divisible by  $\ord(e)=n$, as desired. This completes the case $d=n$.

\subsection*{Case 2:} $d\mid n$. Let $u=\frac{\lcm(m,d)}{d}=\frac{m}{\gcd(m,d)}$ and  $v=\frac{n}{\lcm(m,d)}$. Note $uvd=n$. Let $K\leq G$ be a subgroup such that $$K\cong  C_u\oplus C_{uv}\und G/K\cong C_{\gcd(m,d)}\oplus C_d$$ and let $\varphi:G\rightarrow G/K\cong C_{\gcd(m,d)}\oplus C_d$ denote the natural homomorphism.
Note, under the assumption $d\mid n$, that \eqref{target-value} becomes \be\label{gasp}\mathsf D^*(G\oplus C_d)=n+\lcm(m,d)+\gcd(m,d)-2.\ee
Let $S\in \mathcal F(G)$ be a sequence with $$|S|=n+\lcm(m,d)+\gcd(m,d)-2=(uv+u-2)d+2d+\gcd(m,d)-2.$$ In view of Case 1 and \eqref{target-value}, we have $\zsm{d}{G/K}=\mathsf D^*(G/K\oplus C_d)=2d+\gcd(m,d)-2$. Thus, applying Lemma \ref{useful-lemma} to $\varphi(S)$, we find a product decomposition $S=S_1\cdot\ldots\cdot S_{uv+u-1}S'$ with each $S_i$ zero-sum modulo $K$ and of length divisible by $d$. But now, in view of \eqref{cyclops}, the sequence $\prod_{i=1}^{uv+u-1}\sigma(S_i)\in \mathcal F(K)$ has length $uv+u-1=\mathsf D(C_u\oplus C_{uv})=\mathsf D(K)$. Hence, applying the definition of $\mathsf D(K)$ to $\prod_{i=1}^{uv+u-1}\sigma(S_i)$, we find a non-empty zero-sum subsequence, say indexed by $I\subseteq [1,uv+u-1]$. Thus $\sigma(\prod_{i\in I}S_i)=0$. Moreover, since $d\mid |S_i|$ for each $i$, it follows that $d\mid |\prod_{i\in I}S_i|$, as desired. This completes the case $d\mid n$.

\bigskip

We now proceed to show \be\label{super-bound}\zsm{d}{G}\leq \lcm(n,d)-n+\zsm{\gcd(n,d)}{G}.\ee Once \eqref{super-bound} is established, then, applying Case 2 to $\zsm{\gcd(n,d)}{G}$ and using \eqref{gasp}, we will know   \ber \nn \zsm{d}{G}&\leq& \lcm(n,d)-n+\mathsf D^*(G\oplus C_{\gcd(n,d)})\\\nn &=& \lcm(n,d)-n+(n+\lcm(m,\gcd(n,d))+\gcd(m,\gcd(n,d))-2)\\\nn &=&
\lcm(n,d)+\lcm(m,\gcd(n,d))+\gcd(m,d)-2,
\eer which is equal to $\mathsf D^*(G\oplus C_d)$ by Lemma \ref{D*lemma}. In consequence, once \eqref{super-bound} is established, the proof will be complete. We continue with the proof of \eqref{super-bound}. As \eqref{super-bound} holds trivially when $d\mid n$, we assume $d\nmid  n$.

Let $\alpha n=\lcm(n,d)$. Then, since $d\nmid n$, we have $\alpha\geq 2$. Let $S\in \mathcal F(G)$ be a sequence with \be\label{butterflys}|S|=\lcm(n,d)-n+\zsm{\gcd(n,d)}{G}=(\alpha-1)n+\zsm{\gcd(n,d)}{G}.\ee By Case 2 and \eqref{target-value}, we have \be\label{usetee}\zsm{\gcd(n,d)}{G}=n+\lcm(m,\gcd(n,d))+\gcd(m,\gcd(n,d))-2\geq n+m-1.\ee Thus it follows  from \eqref{kemnitz} that  $$2n+\zsm{\gcd(n,d)}{G}\geq 3n+m-1\geq \mathsf s(G).$$ Consequently, in view of \eqref{butterflys} and $\alpha\geq 2$, it follows, by repeatedly applying the definition of $\mathsf s(G)$ to $S$, that we can find $\alpha-2$ zero-sum subsequences $S_1,\ldots,S_{\alpha -2}\in \mathcal F(G)$ such that $S_1\cdot\ldots\cdot S_{\alpha-2}|S$ and $|S_i|=n$ for all $i\in [1,\alpha-2]$. Let $S'=S(S_1\cdot\ldots\cdot S_{\alpha-2})^{-1}$. Then, in view of  \eqref{usetee} and Case 1, we have $$|S'|=|S|-(\alpha-2)n=n+\zsm{\gcd(n,d)}{G}\geq 2n+m-1\geq \zsm{n}{G}.$$ Hence, since $\zsm{n}{G}<3n$, applying the definition of $\zsm{n}{G}$ to $S'$ yields a zero-sum subsequence $S_{\alpha-1}\mid S'$ with $|S_{\alpha-1}|=n$ or $|S_{\alpha-1}|=2n$.
If $|S_{\alpha-1}|=2n$, then $S_1\cdot\ldots\cdot S_{\alpha-1}$ is a zero-sum subsequence of $S$ with length $(\alpha-2)n+2n=\alpha n=\lcm(n,d)$, which is a multiple of $d$, and thus of the desired length. Therefore we may instead assume $|S_{\alpha-1}|=n$.
Let $S''=S(S_1\cdot\ldots\cdot S_{\alpha-1})^{-1}$. Then $|S''|=|S|-(\alpha-1)n=\zsm{\gcd(n,d)}{G}$, so applying the definition of $\zsm{\gcd(n,d)}{G}$ to $S''$ yields a zero-sum sequence $S_0\mid S''$ with length $|S_0|=k_0\gcd(n,d)$ for some $k_0\in \N$.

Since $\alpha n=\lcm(n,d)$, it follows that $$d=\alpha\gcd(n,d).$$ Let $n=n'\gcd(n,d)$. Then, since $d=\alpha\gcd(n,d)$, we see that $$\gcd(\alpha,n')=1.$$ If $k_0\equiv 0\mod \alpha$, then $$|S_0|=k_0\gcd(n,d)\equiv \alpha\gcd(n,d)\equiv 0\mod \alpha\gcd(n,d),$$ in which case, since $\alpha\gcd(n,d)=d$, we see that $S_0$ is a zero-sum subsequence of length divisible by $d$, as desired. Therefore we may assume $k_0\not\equiv 0\mod \alpha$.

Observe that $$|S_1\cdot\ldots\cdot S_j|=jn=jn'\gcd(n,d)\quad\mbox{ for }j\in [1,\alpha-1].$$ Thus, since $\gcd(\alpha,n')=1$, we conclude that
$$\{\frac{1}{\gcd(n,d)}|S_1|,\frac{1}{\gcd(n,d)}|S_1S_2|,\ldots,\frac{1}{\gcd(n,d)}|S_1\cdot\ldots\cdot S_{\alpha-1}|\}$$ is a full set of nonzero residue classes modulo $\alpha$. Consequently, since $k_0\not\equiv 0\mod \alpha$, we can find $k\in [1,\alpha-1]$ such that $\frac{1}{\gcd(n,d)}|S_1\cdot\ldots\cdot S_k|+k_0\equiv 0\mod \alpha$, in which case $$|S_0S_1\cdot\ldots\cdot S_k|=|S_1\cdot\ldots\cdot S_k|+k_0\gcd(n,d)\equiv 0\mod \alpha\gcd(n,d).$$ Since $\alpha\gcd(n,d)=d$, this means that $S_0S_1\cdot\ldots\cdot S_k$ is a subsequence of $S$ with length divisible by $d$. Moreover, since each $S_i$ is zero-sum, it follows that the subsequence $S_0S_1\cdot\ldots\cdot S_k$ is also zero-sum, whence we have found a zero-sum sequence of the desired length, completing the proof of \eqref{super-bound}, which completes the proof as remarked earlier.
\end{proof}

\bigskip
\section{Upper bounds  for the lengths of zero-sum subsequences} \label{4}
\bigskip

Let $H$ be a Krull monoid with class group $G$ and suppose that
every class contains a prime divisor. The investigation of sets of
lengths of the form $\mathsf L (uv)$, where $u,v \in H$ are
irreducible elements, is a frequently studied topic in the theory of
non-unique factorizations (see for example \cite[Section
6.6]{Ge-HK06a}). Only recently, a close connection of this topic
with the catenary degree $\mathsf c (H)$ of $H$ was found---see
the invariant $\daleth (H)$ introduced in \cite{Ge-Gr-Sc11a}. As
is well-known, the study of sets $\mathsf L (uv)$ translates into a
zero-sum problem as follows: pick two minimal zero-sum sequences $U$
and $V$ over $G$ and find product decompositions of the form $UV = W_1 \cdot
\ldots \cdot W_k$ with  $W_1, \ldots, W_k$  minimal zero-sum
sequences over $G$. To control the number $k$ of atoms in such a factorization, it is
desirable to be able to find zero-sum subsequences of the (long) zero-sum sequence $UV$ with
bounded lengths (see Condition
$(b)$ in Lemma \ref{4.1}).

Thus, in zero-sum terminology,  we have to study conditions which
imply that, for a given $d \in [1, \mathsf D (G)-1]$,  every zero-sum
sequence $A \in \mathcal F (G)$ of length $|A| \ge \mathsf D (G)+1$
has a zero-sum subsequence $T$ of length $|T| \in [1, d]$. Since, by
definition, $\mathsf D (G)$ is the maximal length of a minimal
zero-sum sequence, it makes no sense to consider the above question
for sequences $A$ of length less than $\mathsf{D}(G)+1$. We start
with a simple characterization of this property which allows us to
obtain a natural restriction for $d$.

\medskip
\begin{lemma} \label{4.1}
Let  $d \in \N$ with $\mathsf D (G) \le 2d-1$. Then the following
statements are equivalent\,{\rm :}
\begin{enumerate}
\item[(a)] For all $U, V \in \mathcal A (G)$ with $|UV| \ge 2d$, there exists
           a zero-sum subsequence $T$ of $UV$ of length $|T| \in [1, d]$.

\item[(b)] For all $U, V \in \mathcal A (G)$,  there exists
           a zero-sum subsequence $T$ of $UV$ of length $|T| \in [1, d]$.

\item[(c)] Every zero-sum sequence $A \in \mathcal F (G)$ of length $|A|
           \ge \mathsf D (G)+1$ has
           a zero-sum subsequence $T$  of length $|T| \in [1, d]$.
\end{enumerate}
\end{lemma}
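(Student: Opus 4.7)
The equivalence breaks naturally into the two parts (a)$\Leftrightarrow$(b) and (b)$\Leftrightarrow$(c), and the implication (b)$\Rightarrow$(a) is immediate since (a) only demands the conclusion under the additional restriction $|UV|\ge 2d$. The crux of the three nontrivial implications is a single length-pigeonhole observation enabled by the hypothesis $\mathsf{D}(G)\le 2d-1$: if $U,V\in\mathcal{A}(G)$ with $|UV|\le 2d-1$, then $|U|+|V|\le 2d-1$ forces $\min\{|U|,|V|\}\le d-1$, and the shorter atom is itself a nonempty zero-sum subsequence of $UV$ of length in $[1,d]$.

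With this observation in hand, (a)$\Rightarrow$(b) follows by splitting on whether $|UV|\ge 2d$ (apply (a)) or $|UV|\le 2d-1$ (apply the pigeonhole observation). In the same spirit, (c)$\Rightarrow$(b) splits on whether $|UV|\ge \mathsf{D}(G)+1$ (apply (c) to the zero-sum sequence $A=UV$) or $|UV|\le \mathsf{D}(G)\le 2d-1$ (apply the pigeonhole observation).

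For the remaining implication (b)$\Rightarrow$(c), I plan to use the fact that every zero-sum sequence is a product of atoms. Given $A\in\mathcal{F}(G)$ with $\sigma(A)=0$ and $|A|\ge\mathsf{D}(G)+1$, factor $A=W_1\cdot\ldots\cdot W_r$ with each $W_i\in\mathcal{A}(G)$. Since each atom has length at most $\mathsf{D}(G)<|A|$, we must have $r\ge 2$. Applying (b) to the pair $W_1,W_2$ produces a zero-sum subsequence $T$ of $W_1W_2$, and hence of $A$, with $|T|\in[1,d]$, as required.

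I do not foresee any substantive obstacle: the whole argument is essentially bookkeeping on sequence lengths combined with the existence of atomic factorizations, and the assumption $\mathsf{D}(G)\le 2d-1$ plays the single nontrivial role of trivializing the short-sequence cases via the pigeonhole observation.
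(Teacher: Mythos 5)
Your proof is correct and follows essentially the same route as the paper's: the pigeonhole observation that $\mathsf D(G)\le 2d-1$ forces the shorter of two atoms to have length at most $d$, plus atomic factorization of $A$ for (b)$\Rightarrow$(c). The only cosmetic difference is that you prove the two equivalences (a)$\Leftrightarrow$(b) and (b)$\Leftrightarrow$(c) separately, whereas the paper runs the cycle (a)$\Rightarrow$(b)$\Rightarrow$(c)$\Rightarrow$(a), with the last step immediate since $|UV|\ge 2d\ge\mathsf D(G)+1$.
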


\begin{proof}
(a) \,$\Rightarrow$\, (b) \ Let $U, V \in \mathcal A (G)$ be given,
say $|U| \le |V|$. If $|UV| \ge 2d$, then the assertion follows from
(a). If $|UV| \le 2d$, then we set $T = U$ and get $2|T| \le |UV|
\le 2d$.

\smallskip
\noindent
(b) \,$\Rightarrow$\, (c) \ Let $A \in \mathcal F (G)$ be zero-sum. Then there
are  $U_1, \ldots, U_k \in \mathcal A (G)$ such that
$A = U_1 \cdot\ldots\cdot U_k$. Since $|A|  \ge \mathsf D (G)+1$,
it follows that $k \ge 2$. Thus there exists a zero-sum sequence $T$
with $T \mid U_1 U_2$, and hence with $T \mid A$ also, such that $|T| \in [1, d]$.

\smallskip
\noindent
(c) \,$\Rightarrow$\, (a) \ Obvious.
\end{proof}

\medskip
\begin{remark} \label{4.2}
Let $d \in \N$. In general, none of the statements in the previous
lemma can hold without the assumption $\mathsf D (G) \le 2d-1$. This
can be seen from the following example. Take $G = H \oplus H$ such
that $\mathsf D (G) = 2 \mathsf D (H) - 1$ (note that this holds
true if $H$ is cyclic or a $p$-group). Then there are $U, V \in
\mathcal A (G)$ such that $\langle \supp (U) \rangle \cap \langle
\supp (V) \rangle = \{0\}$ and $|U| = |V| = \mathsf D (H)$. Thus the
only nonempty zero-sum subsequences of $UV$ are $U$ and $V$, which
have length
\[
|U| = |V| = \frac{\mathsf D (G) + 1}{2} \,.
\]
\end{remark}

We give the main result of this section; see below for groups fulfilling the assumptions.

\medskip
\begin{theorem} \label{4.3}
Let  $d \in \N$ with $\mathsf D (G) \le 2d-1$ and suppose that
$\mathsf s_{d\N} (G) \le 3d-1$.
\begin{enumerate}
\item Every sequence $S \in \mathcal F (G)$ of length $|S| = \mathsf s_{d\N}
      (G)$ has a zero-sum subsequence $T$ of length $|T| \in [1,d]$.
\item Every zero-sum sequence $A \in \mathcal F (G)$ of length $|A|
      \ge \mathsf D (G)+1$ has a zero-sum subsequence $T$ of length $|T| \in [1,d]$.
\end{enumerate}
\end{theorem}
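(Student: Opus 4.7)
The plan is to prove the first statement directly from the defining property of $\mathsf{s}_{d\N}(G)$, and then obtain the second by reducing to the first when $|A|$ is large and using an atom-decomposition argument otherwise. For Part 1, given $S \in \mathcal{F}(G)$ with $|S| = \mathsf{s}_{d\N}(G)$, the definition of $\mathsf{s}_{d\N}(G)$ immediately furnishes a nonempty zero-sum subsequence $T_1 \mid S$ with $d \mid |T_1|$. Since $d \le |T_1| \le |S| \le 3d - 1$, we must have $|T_1| \in \{d, 2d\}$. If $|T_1| = d$, take $T = T_1$; otherwise $|T_1| = 2d > \mathsf{D}(G)$, so $T_1$ is not a minimal zero-sum sequence and factors as $T_1 = T' T''$ into two nonempty zero-sum subsequences with $|T'| + |T''| = 2d$, so one of them has length in $[1, d]$.

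For Part 2, let $A$ be a zero-sum sequence with $|A| \ge \mathsf{D}(G) + 1$. If $|A| \ge \mathsf{s}_{d\N}(G)$, I would take any subsequence $S \mid A$ with $|S| = \mathsf{s}_{d\N}(G)$ and invoke Part 1 to obtain a zero-sum subsequence of $A$ of length in $[1, d]$. Otherwise $|A| \le \mathsf{s}_{d\N}(G) - 1 \le 3d - 2$; I would factor $A = U_1 \cdots U_k$ into minimal zero-sum atoms, with $k \ge 2$ since $|A| > \mathsf{D}(G) \ge \max_i |U_i|$. If some $|U_i| \le d$, set $T = U_i$. Otherwise every $|U_i| \ge d + 1$, so $k(d+1) \le 3d - 2$ forces $k = 2$, leaving the subcase $A = U_1 U_2$ with $|U_1|, |U_2| \ge d + 1$ and $|A| \in [2d + 2, 3d - 2]$.

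The main obstacle will be this final subcase. Here I must either produce another zero-sum subsequence of $A$ of length at most $d$ or contradict the standing hypotheses. A natural observation is that the same length bound forces every atom factorization of $A$ to consist of exactly two atoms, each of length $\ge d + 1$; hence, under the contradictory assumption that no short zero-sum subsequence exists, every nontrivial zero-sum subsequence $T \mid A$ must itself be an atom and pair with $A/T$ to give another two-atom factorization of $A$. Combining this rigidity with the lower bound $|\{\sigma(T) : T \mid U_i\}| \ge |U_i| \ge d + 1$ (which follows from the distinctness of the partial sums of a minimal zero-sum sequence) and with the tight interplay of the hypotheses $\mathsf{D}(G) \le 2d - 1$ and $\mathsf{s}_{d\N}(G) \le 3d - 1$ should yield the required contradiction. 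Alternatively, one may first invoke Lemma \ref{4.1} to reduce the problem to statement (a), but the same critical subcase reappears and must be handled by the same structural analysis.
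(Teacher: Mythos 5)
Your Part 1 coincides with the paper's argument and is correct, and the opening reductions of Part 2 (passing to a subsequence of length $\zsm{d}{G}$ when $|A|\ge \zsm{d}{G}$, and the atom decomposition forcing $A=U_1U_2$ with $|U_1|,|U_2|\ge d+1$ otherwise) are also sound. However, the proof is not complete: the subcase you yourself call ``the main obstacle'' is never actually resolved. The rigidity observation (under the contradictory assumption, every proper nonempty zero-sum subsequence of $A$ is an atom) and the bound on the number of distinct subsums of $U_i$ do not visibly interact with the hypothesis $\zsm{d}{G}\le 3d-1$: that hypothesis only constrains sequences of length at least $\zsm{d}{G}$, whereas in this subcase $|A|\le 3d-2$, which may be strictly smaller than $\zsm{d}{G}$, so the hypothesis says nothing about $A$ as it stands. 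Asserting that the ``tight interplay of the hypotheses should yield the required contradiction'' is a hope, not an argument; without a concrete mechanism to bring $\zsm{d}{G}\le 3d-1$ to bear on a sequence that is too short, the critical subcase remains open.

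The missing idea is the paper's zero-padding device, which handles exactly the range $2d+1\le |A|<\zsm{d}{G}$. Set $k=\zsm{d}{G}-|A|\in[1,d-2]$ and consider $S=0^kA$, which has length exactly $\zsm{d}{G}\le 3d-1$; hence $S$ has a zero-sum subsequence $T=0^{k'}A'$ with $|T|\in\{d,2d\}$, where $k'\in[0,k]$ and $A'\mid A$. If $|T|=d$, then $A'$ is a zero-sum subsequence of $A$, nonempty since $k'\le k<d$, of length at most $d$. If $|T|=2d$, then $|A'|\ge 2d-k=2d+|A|-\zsm{d}{G}$, so the complement ${A'}^{-1}A$ is zero-sum (both $A$ and $A'$ are zero-sum) of length at most $\zsm{d}{G}-2d\le d-1$, and it is nonempty because $|A|\ge 2d+1>2d\ge|A'|$. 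This disposes of your problematic subcase directly and, in fact, renders the atom-based case analysis unnecessary beyond the easy case $|A|\le 2d$; some such device for lengthening $A$ to reach $\zsm{d}{G}$ is indispensable, and it is the step your proposal lacks.
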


\begin{proof}
1. Let $S \in \mathcal F (G)$ be a sequence of length $|S| = \mathsf s_{d\N} (G)$.
 Since
$\mathsf s_{d\N} (G) \le 3d-1$, $S$ has a zero-sum subsequence $T$
of length $|T| \in \{d, 2d \}$. If $|T| = d$, then we are done. If
$|T| = 2d$, then $2d \ge \mathsf D (G)+1$ implies that $T$ has a
product decomposition $T = T_1 T_2$ with $T_1$ and $T_2$ nonempty
zero-sum sequences. Clearly, we have $\min \{|T_1|, |T_2| \} \in
[1,d]$.

\smallskip
\noindent
2. Let $A \in \mathcal F (G)$ be zero-sum with $|A|
      \ge \mathsf D (G)+1$. Then $A$ is a product of two nonempty
      zero-sum subsequences, and if $|A| \le 2d$, then the assertion
      is clear as before. Suppose that
$|A| \ge 2d+1$. If $|A| \ge \mathsf s_{d\N} (G)$, then the assertion
follows from 1. Therefore we have \be\label{bearpolka}2d+1 \le |A| <  \mathsf s_{d \N} (G) \le
3d-1.\ee Now the sequence
\[
S = 0^kA, \quad \text{where} \quad k = \mathsf s_{d \N} (G) - |A| \in
[1, d-2],
\]
has a zero-sum subsequence $T = 0^{k'} A'$ of length $|T| \in \{d,
2d\}$, where $k' \in [0,k]$ and $A' \mid A$. If $|T| = d$, then $A'$ is
a  zero-sum subsequence of $A$ of length $|A'| \in [1,d]$, as desired. If $|T| =
2d$, then $A'$ is a zero-sum subsequence of length $$|A'|\geq 2d-k=2d+|A|-\mathsf s_{d \N} (G).$$ Hence,
${A'}^{-1}A$ is a zero-sum subsequence (as both $A$ and $A'$ are zero-sum sequences) with length (in view of \eqref{bearpolka})
$$|{A'}^{-1}A|=|A|-|A'|\leq |A|-(2d+|A|-\mathsf s_{d \N} (G))= \zsm{d}{G}-2d\leq 3d-1-2d=d-1.$$ Moreover, since \eqref{bearpolka} implies $|A|\geq 2d+1$ while $A'\mid T$ implies $|A'|\leq |T|=2d$, we see that ${A'}^{-1}A$ is also a nonempty zero-sum subsequence, and the proof is complete in this case as well.
\end{proof}

\medskip

Results of the two preceding sections yield various classes of groups fulfilling the conditions
of Theorem \ref{4.3}.
The groups covered by the assumptions of Theorem \ref{ind_thm_bound}.1, thus in particular groups of rank two, fulfil the conditions of Corollary \ref{4.4}. In the special case of groups of rank two, the result was first
achieved in \cite[Lemma 3.6]{Ge-Gr09c}.

\medskip
\begin{corollary} \label{4.4}
Let  $\exp (G) = n$ and suppose that $\mathsf D (G) \le 2n-1$ and $\mathsf{s}_{n\N} (G) \le 3n-1$.
Then every zero-sum sequence $A \in \mathcal{F}(G)$ of length $|A| \ge \mathsf{D}(G)+1$ has a nonempty  zero-sum subsequence of length at most $\exp(G)$.
\end{corollary}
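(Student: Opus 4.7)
The plan is essentially a one-line reduction: apply Theorem~\ref{4.3} with the choice $d = \exp(G) = n$. With that substitution, the two hypotheses of Theorem~\ref{4.3}, namely $\mathsf D(G) \le 2d-1$ and $\mathsf s_{d\mathbb N}(G) \le 3d-1$, become exactly the two standing assumptions $\mathsf D(G) \le 2n-1$ and $\mathsf s_{n\mathbb N}(G) \le 3n-1$ of the corollary. Consequently, part~2 of Theorem~\ref{4.3} is applicable.

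Theorem~\ref{4.3}.2 then asserts that every zero-sum sequence $A \in \mathcal F(G)$ with $|A| \ge \mathsf D(G)+1$ admits a nonempty zero-sum subsequence $T$ of length $|T| \in [1,d] = [1,n] = [1,\exp(G)]$, which is precisely the claim of the corollary.

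There is no real obstacle here; the work was already done in Theorem~\ref{4.3} (where the proof splits into the cases $|A| \le 2d$, $|A| \ge \mathsf s_{d\mathbb N}(G)$, and the intermediate range treated via padding by zeros). The role of Corollary~\ref{4.4} is simply to record the important special case $d = \exp(G)$, which is the setting relevant for the applications to non-unique factorizations discussed earlier in the section, and to point out that many natural families of groups (those covered by Theorem~\ref{ind_thm_bound}.1, and in particular all groups of rank at most two) satisfy both hypotheses simultaneously.
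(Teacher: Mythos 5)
Your proposal is correct and is exactly the paper's argument: the paper also proves Corollary~\ref{4.4} by noting it is the special case $d=\exp(G)=n$ of Theorem~\ref{4.3}.2, with the corollary's hypotheses matching those of the theorem verbatim.
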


\begin{proof}
This is a special case of Theorem \ref{4.3}.2.
\end{proof}

\medskip
\begin{corollary} \label{4.5}
Let $G$ be a $p$-group. Suppose there exists some $i \in [1, \mathsf{D}(G)]$ such that $( \mathsf{D}^{\ast} (G)+i)/2$ is a power of $p$. Then
every zero-sum sequence $A \in \mathcal F (G)$ of length $|A|\ge \mathsf{D}(G)+1$ has a zero-sum subsequence $T$ of length $|T| \in [1, (\mathsf{D}^{\ast} (G)+i)/2 ]$.
\end{corollary}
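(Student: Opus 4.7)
The plan is to apply Theorem \ref{4.3}.2 with the specific choice $d = (\mathsf{D}^{\ast}(G) + i)/2$. By hypothesis this $d$ is a power of $p$, hence a positive integer, so it is a legitimate candidate for the assumptions of that theorem. The desired conclusion, that every zero-sum sequence $A$ of length $|A| \ge \mathsf{D}(G)+1$ has a zero-sum subsequence of length in $[1, (\mathsf{D}^{\ast}(G)+i)/2]$, is then immediate.

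What remains is to verify the two hypotheses $\mathsf{D}(G) \le 2d - 1$ and $\mathsf{s}_{d\N}(G) \le 3d-1$. For the first, I would use that $G$ is a $p$-group to get $\mathsf{D}(G) = \mathsf{D}^{\ast}(G) = 2d - i$, which is at most $2d-1$ because $i \ge 1$. For the second, since $d$ is a power of $p$ and $G$ is a $p$-group, Theorem \ref{onsd_thm}.2(a) applies and gives the exact value $\mathsf{s}_{d\N}(G) = \mathsf{D}^{\ast}(G) + d - 1 = (2d-i) + d - 1 = 3d - 1 - i$, which is at most $3d-1$ since $i \ge 1$.

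There is essentially no obstacle; the content is all in the previously established machinery. The role of the peculiar hypothesis that $(\mathsf{D}^{\ast}(G)+i)/2$ be a power of $p$ is precisely to put $d$ into the regime where Theorem \ref{onsd_thm}.2(a) applies and pins down $\mathsf{s}_{d\N}(G)$ tightly enough to meet the $3d-1$ threshold required by Theorem \ref{4.3}.
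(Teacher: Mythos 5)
Your proposal is correct and follows essentially the same route as the paper: set $d = (\mathsf{D}^{\ast}(G)+i)/2$, use $\mathsf{D}(G)=\mathsf{D}^{\ast}(G)$ for $p$-groups together with Theorem \ref{onsd_thm}.2(a) to verify $\mathsf{D}(G)\le 2d-1$ and $\mathsf{s}_{d\N}(G)\le 3d-1$, and then invoke Theorem \ref{4.3}.2. No issues.
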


\begin{proof}
We set $d = ( \mathsf{D}^{\ast} (G)+i)/2$. Then $2d = \mathsf{D}^{\ast} (G) +
i \ge \mathsf{D} (G)+1$, and thus Theorem \ref{onsd_thm}.2(a) implies that
$\zsm{d}{G} \le \mathsf D^*(G)+d-1\leq \mathsf D(G)+d-1\leq 3d-2$. Therefore the assertion follows
from Theorem \ref{4.3}.
\end{proof}

\medskip
Note, if $(\mathsf{D}^{\ast}(G)+1)/2$ is a power of $p$, then the
above result is best possible, as can be seen from the example
discussed in Remark \ref{4.2}.

% \bibliography{ger,hk,fact,zerosum,ideal}
% \bibliographystyle{amsplain}

\providecommand{\bysame}{\leavevmode\hbox
to3em{\hrulefill}\thinspace}
\providecommand{\MR}{\relax\ifhmode\unskip\space\fi MR }
% \MRhref is called by the amsart/book/proc definition of \MR.
\providecommand{\MRhref}[2]{%
  \href{http://www.ams.org/mathscinet-getitem?mr=#1}{#2}
} \providecommand{\href}[2]{#2}

\end{document}